\declaretheoremstyle[
    spaceabove=\parsep, spacebelow=\parsep,
    headfont=\bfseries, notefont=\normalfont, bodyfont=\itshape,
    headpunct={.}, notebraces={}{}, postheadspace={ },
    ]{basic-theorem}
\declaretheoremstyle[
    spaceabove=\parsep, spacebelow=\parsep,
    headfont=\bfseries, notefont=\normalfont, bodyfont=\normalfont,
    headpunct={.}, notebraces={(}{)}, postheadspace={ },
    ]{basic-definition}
\declaretheoremstyle[
    spaceabove=\parsep, spacebelow=\parsep,
    headfont=\itshape, notefont=\normalfont, bodyfont=\normalfont,
    headpunct={.}, notebraces={(}{)}, postheadspace={ },
    ]{basic-remark}
\theoremstyle{basic-theorem}
\newtheorem{keytheorem}{Theorem}[section]
\newtheorem{keyconjecture}[keytheorem]{Conjecture}
\newtheorem{theorem}{Theorem}[section]
\newtheorem{corollary}[theorem]{Corollary}
\newtheorem{lemma}[theorem]{Lemma}
\theoremstyle{basic-definition}
\newtheorem{notation}[theorem]{Notation}
\theoremstyle{basic-remark}
\titleformat{\section}{\normalfont\bfseries\large\centering}{\S\,\thesection}{0.5em}{#1}{}
\crefname{section}{\S\kern -1pt}{\S\S \kern -1pt}
\newcommand{\fakesection}[1]{%
    \par\refstepcounter{section}%
    \sectionmark{#1}%
    \addcontentsline{toc}{section}{\protect\numberline{\thesection}#1}%
}
\titleformat{\subsection}[runin]{\normalfont\bfseries}{\S\,\thesubsection}{0.5em}{#1.}{}
\titlespacing{\subsection}{0pt}{1em}{\wordsep}[]
\crefname{subsection}{\S \kern -1pt}{\S\S \kern -1pt}
\newcommand{\xmark}{\ding{55}}%
\newcommand{\ul}{\underline}
\newcommand{\ol}{\overline}
\newcommand{\orth}{{}^\perp}
\newcommand{\bbC}{\mathbb{C}}
\newcommand{\bbR}{\mathbb{R}}
\newcommand{\bbZ}{\mathbb{Z}}
\newcommand{\bbN}{\mathbb{N}}
\newcommand{\bbk}{\mathbb{k}}
\newcommand{\KK}{\mathbf{K}}
\newcommand{\HH}{\textnormal{H}}
\newcommand{\OO}{\mathscr{O}}
\newcommand{\into}{\hookrightarrow}
\newcommand{\onto}{\twoheadrightarrow}
\newcommand{\isoto}{\mathrel{\ooalign{
     $\to$\cr
     \hidewidth\raise.3em\hbox{$\scaleobj{.7}{\sim}\mkern7mu$}\cr
    }}
}
\newcommand{\leftrarrows}{\mathrel{\raise.75ex\hbox{\oalign{%
  $\scriptstyle\leftarrow$\cr
  \vrule width0pt height.5ex$\hfil\scriptstyle\relbar$\cr}}}}
\newcommand{\lrightarrows}{\mathrel{\raise.75ex\hbox{\oalign{%
  $\scriptstyle\relbar$\hfil\cr
  $\scriptstyle\vrule width0pt height.5ex\smash\rightarrow$\cr}}}}
\newcommand{\Rrelbar}{\mathrel{\raise.75ex\hbox{\oalign{%
  $\scriptstyle\relbar$\cr
  \vrule width0pt height.5ex$\scriptstyle\relbar$}}}}
\def\leftrightarrowsfill@{\arrowfill@\leftrarrows\Rrelbar\lrightarrows}
\newcommand{\xleftrightarrows}[2][]{\ext@arrow 3399\leftrightarrowsfill@{#1}{#2}}
\newcommand{\curlymod}{\ensuremath{\mathcal{m\kern -0.8pt o \kern -0.5pt d}}}
\newcommand{\curlyExt}{\ensuremath{\mathcal{E\kern -0.5pt x\kern -1pt t}}}
\newcommand{\curlyHom}{\ensuremath{\mathcal{H\kern -2pt o\kern -1pt m}}}
\newcommand{\curlyCoh}{\ensuremath{\mathcal{C\kern -2pt o\kern -1pt h\kern -1pt}}}
\newcommand{\curlycoh}{\ensuremath{\mathcal{c\kern -1pt o\kern -1pt h\kern -1pt}}}
\newcommand{\fl}{\textnormal{fl}}
\newcommand{\rmod}{\mathop{\curlymod}\!}
\newcommand{\rflmod}{\mathop{\fl\kern 0.5pt\curlymod}}
\newcommand{\Coh}{\mathop\curlyCoh}
\newcommand{\coh}{\mathop\curlycoh}
\DeclareMathOperator{\Dm}{\mathbf{D}^\mathbf{0}\kern -1.5pt}
\DeclareMathOperator{\Db}{\mathbf{D}^\textnormal{b}\kern -1pt}
\DeclareMathOperator{\Dfl}{\mathbf{D}^\fl\kern -1pt}
\DeclareMathOperator{\Spec}{\textnormal{Spec}}
\DeclareMathOperator{\Hom}{\textnormal{Hom}}
\DeclareMathOperator{\RHom}{\ensuremath{\mathbf{R}\!\Hom}}
\DeclareMathOperator{\End}{\textnormal{End}}
\DeclareMathOperator{\Ext}{\textnormal{Ext}}
\DeclareMathOperator{\Aut}{\textnormal{Aut}}
\DeclareMathOperator{\Stab}{\textnormal{Stab}}
\DeclareMathOperator{\sheafExt}{\curlyExt}
\DeclareMathOperator{\sheafHom}{\curlyHom}
\renewcommand*{\ker}{\mathop\textnormal{ker}}
\renewcommand*{\dim}{\mathop\textnormal{dim}}
\newcommand{\SKMS}{\mathcal{M}_\textnormal{SK}}
\newcommand{\StabX}{\Stab(\Dm X)}
\DeclareMathOperator{\Pic}{\textnormal{Pic}}
\DeclareMathOperator{\tors}{\textnormal{tors}}
\DeclareMathOperator{\torf}{\textnormal{torf}}
\DeclareMathOperator{\Supp}{\textnormal{Supp}}
\begin{document}
\title{\huge Simple complexes on a flopped curve}
\author{Parth Shimpi \orcidlink{0009-0008-2066-7474}}

\date{}
\maketitle

\begin{abstract}
    Studying crepant blow-ups of (compound) du Val singularities, we classify
    complexes of coherent sheaves which admit no negative
    self-extensions --- such a complex, up to flops and mutation equivalences,
    must either be (1) a module over a derived--equivalent algebra, or (2) a
    two--term extension of a coherent sheaf by skyscraper sheaves, or (3) a
    direct sum of shifts of skyscrapers.

    This translates into classifications of bricks, spherical objects, stability
    conditions, and algebraic t-structures in the local derived category; the lists
    populated by the homological minimal programme turn out exhaustive. We deduce
    that the Bridgeland stability manifold is connected, and that all basic
    tilting complexes on the variety (equivalently on the \(g-\)tame algebras
    derived-equivalent to it) are related by shifts and iterated mutation.
\end{abstract}

\medskip
\fakesection{Introduction}

Proper birational morphisms \(\pi:X\to Z\) that crepantly contract an
irreducible rational curve \(C\subset X\) to a point \(\mathbf{0}\in Z\) have
for decades piqued the curiosities of algebraic and symplectic geometers alike,
and the advent of non-commutative and homological techniques in dimensions \(2\)
and \(3\)
\cite{kapranovKleinianSingularitiesDerived,bridgelandFlopsDerivedCategories,vandenberghThreedimensionalFlopsNoncommutative,wemyssFlopsClustersHomological}
has pushed into limelight the problem of understanding derived equivalences
involving \(X\).

Any solution to the problem must involve a classification of \emph{spherical
objects}, i.e.\ complexes which generate an autoequivalence
of \(\Db X\) mirroring a Dehn twist. These are objects
whose self-extensions resemble the singular cohomology of a sphere
\cite{seidelBraidGroupActions}, and this condition is sufficiently
restrictive that classification attempts have found success in specific cases
where the geometry of \(X\) and its mirror is well-understood \cite{ishiiAutoequivalencesDerivedCategories,keatingSymplectomorphismsSphericalObjects2024}.

Walking the thin line that separates courage from na\"ivet\'e, we forget the
cohomological restriction and instead ask if it is possible to classify \emph{all}
complexes supported on \(C\) that admit no negative self-extensions.

\subsection*{Complexes, classified} Let \(Z=\Spec R\) above be a complete local
\(\bbC\)-scheme of dimension \(2\) (or \(3\)), with an at worst canonical
(resp.\ terminal) singularity at the closed point \(\mathbf{0}\). Thus the
map \(\pi\) models a partial resolution of a (compound) du Val singularity,
obtained by blowing down all curves but one in some minimal model.

Certain wall--crossing rules (realising a flop when \(\dim Z=3\)) yield another
crepant partial resolution \(W\to Z\) that is derived--equivalent to \(X\). To
study the pair of varieties, the homological minimal model programme
\cite{wemyssFlopsClustersHomological,donovanStringyKahlerModuli} furnishes a
\((\bbZ/N\bbZ)\)-indexed family of \emph{modification algebras}
\(\{\Lambda_0,...,\Lambda_{N-1}\}\) with equivalences
\begin{equation}
    \label{eqn:allfunctors} \tag{\(\ast\)}
    \Db X\xrightarrow{\quad \text{VdB}\quad} \Db \Lambda_0, \qquad
    \Db W\xrightarrow{\quad \text{VdB}\quad} \Db \Lambda_{N-1}, \qquad
    \Db\Lambda_i \xleftrightarrows[\quad\Phi_i\quad]{\Phi_i} \Db \Lambda_{i+1}
    \quad (i\in \bbZ/N\bbZ).
\end{equation}
The number \(N\)(\(\geq 1\)) of algebras involved is called the \emph{helix period}
associated to \(\pi\), and depends only on the \emph{length} \(\ell\) of the
exceptional fiber \cite[see][\S 2.1]{donovanStringyKahlerModuli}. The
\emph{mutation functors} \(\Phi_i,\Phi_i\) are induced by (non-commutative
deformations of) spherical objects, and provide a standard set of equivalences up to which we classify.

\begin{keytheorem}[{(= \ref{cor:twotermclassification},
\ref{lem:inductioncontinues}, \ref{lem:collapse})}]
    \label{keythm:objclassification}
    If a complex \(x\in \Db X\) is supported on \(C\) and satisfies
    \(\Hom^{<0}(x,x)=0\), then up to a shift and iterated applications of the
    functors appearing in \eqref{eqn:allfunctors} and their inverses, \(x\) is
    either
    \begin{enumerate}[(1)]
        \item[{\crtcrossreflabel{(c1)}[item:objclass1]}]
            a \(\Lambda_i\)-module for some \(i\in
            \bbZ/N\bbZ\) , or
        \item[{\crtcrossreflabel{(c2)}[item:objclass2]}]
            a direct sum of shifts of coherent sheaves with
            zero--dimensional support on \(X\) (or on \(W\)), or
        \item[{\crtcrossreflabel{(c3)}[item:objclass3]}]
            a coherent sheaf on \(X\) (or on \(W\)), or a
            two--term complex thereof determined as a Yoneda extension \(x\in
            \Ext^2(\mathcal{G},\mathcal{F})\) between coherent sheaves
            \(\mathcal{F},\mathcal{G}\) such that
            \(\Hom(\mathcal{G},\mathcal{F})=0\) and \(\dim(\Supp\mathcal{G})=0\).
    \end{enumerate}
\end{keytheorem}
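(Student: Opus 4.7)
The plan is to reduce the classification to the cohomological ``amplitude'' of $x$ within a suitably chosen $t$-structure, using the mutations $\Phi_i$ of~\eqref{eqn:allfunctors} as the simplification step. First I would note that the hypothesis $\Hom^{<0}(x,x)=0$ and each of the three listed conclusions are stable under shifts and under the functors in~\eqref{eqn:allfunctors}, so we may freely replace $x$ by any image under such a composition.

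Next, record the amplitude $[a,b]$ of $x$ in a standard heart --- either a perverse heart $\rmod\Lambda_i$ (obtained from the Van den Bergh equivalence on $\Db X$ or $\Db W$) or the usual coherent heart on $X$ or $W$. The base cases $b=a$ and $b=a+1$ should be the content of~\ref{cor:twotermclassification}: a single cohomology puts $x$ in~\ref{item:objclass1} or~\ref{item:objclass3} depending on the heart, while amplitude one uses the truncation triangle of $x$ together with $\Hom^{<0}(x,x)=0$ to force $\Hom(H^b,H^a)=0$, exhibiting $x$ as a Yoneda class in $\Ext^2(H^b,H^a)$ and landing in~\ref{item:objclass3}. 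For amplitude $b-a\geq 2$, \ref{lem:inductioncontinues} should provide a mutation $\Phi_j$ (or its inverse) that strictly reduces the amplitude while preserving $\Hom^{<0}(x,x)=0$; iterating must terminate, at which point either we have returned to a base case or every remaining cohomology has zero-dimensional support. In the latter situation, \ref{lem:collapse} asserts that $x$ splits as a direct sum of shifts of these cohomology sheaves --- exactly~\ref{item:objclass2}.

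The principal obstacle is~\ref{lem:inductioncontinues}. The mutations $\Phi_i$ are built from noncommutative deformation spherical objects and need not be $t$-exact for the standard hearts, so a priori they can spread the cohomology of $x$ across extra degrees before contracting it; guaranteeing a strict amplitude drop in a single step requires a heart-dependent choice of which $\Phi_j$ to apply. The combinatorics of the helix period $N$ and length $\ell$ of $C$ must intervene here, as they govern which mutations are legal and how they chain. I expect the $\Hom^{<0}=0$ hypothesis to supply the essential rigidity: by prohibiting nontrivial negative-degree maps $x\to x$, it should force each chosen mutation to contract rather than spread, so that monotonicity of the amplitude is genuinely maintained and the induction closes.
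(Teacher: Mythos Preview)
Your high-level outline captures the inductive shape, but you have misidentified the roles of the three cited results and, more importantly, missed the central dichotomy that drives the proof.

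The paper does \emph{not} argue that a mutation functor always strictly reduces amplitude. Instead it first proves (\cref{prop:tiltingimproves}) that for $x\in H\llbracket 0,n\rrbracket$ with $n\geq 1$ there is always a \emph{tilt} $K\in[H,H[1]]$ with $x\in K\llbracket 0,n-1\rrbracket$; in fact there is a whole interval $[\ul K,\ol K]$ of such tilts, produced from the minimal torsion class containing $\HH^0(x)$ and the minimal torsion-free class containing $\HH^{-n}(x)$. The crucial point you overlook is that such a tilt need not be reachable by mutation functors --- examples like $\OO_p\oplus\OO_q[1]$ show that sometimes only a non-algebraic (geometric) heart improves the spread. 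The argument therefore bifurcates: \cref{lem:inductioncontinues} says that \emph{if} $\ul K$ or $\ol K$ happens to be algebraic then one can arrange (after possibly one extra tilt) for the spread-reducing heart to be an image of $\rflmod\Lambda_0$ under a composite of mutation functors, and the induction continues. \Cref{lem:collapse} handles the complementary case where \emph{both} $\ul K$ and $\ol K$ are non-algebraic tilts of $\coh X$ (or $\coh W$) in zero-dimensional torsion classes; there one argues directly, via support considerations (\cref{lem:skyscraperdetection}) and a spectral-sequence analysis, that every $\HH^i(x)$ is zero-dimensional with pairwise disjoint supports, giving \ref{item:objclass2}. So \cref{lem:collapse} is not a termination clause for the induction but rather the branch where mutation-based induction is obstructed.

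Your treatment of the two-term base case also has a gap. Having $x\in H[0,1]$ and $\Hom(\HH^0x,\HH^{-1}x)=0$ does not by itself yield \ref{item:objclass3}: the cohomologies are $\Lambda_0$-modules, not coherent sheaves, and even in the coherent heart there is no reason for the top cohomology to have zero-dimensional support. The paper instead invokes \cref{prop:tiltingimproves} to place $x$ in some tilt $K$, and then applies the full classification of tilts of $\rflmod\Lambda_0$ (\cref{thm:tiltclassification}, imported from \cite{shimpiTorsionPairs3fold}) to read off whether $K$ is algebraic (giving \ref{item:objclass1}) or a geometric tilt in skyscrapers (giving \ref{item:objclass3} with the required support constraint on $\mathcal{G}$). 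That external classification is an essential ingredient you have not accounted for.
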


Much like Hara--Wemyss' treatment \cite{haraSphericalObjectsDimensions} of
analogous problems on the subcategory \(\ker(\mathbf{R}\pi_\ast)\subset \Db X\),
we prove \cref{keythm:objclassification} by attempting to inductively improve
the `spread' of a complex until it lies in a known heart.\\
\begin{minipage}[c]{0.57\textwidth}
    This begins by using its cohomologies with respect to
    \(H=\rflmod\Lambda_0\) to construct an intermediate heart \(K\), such that
    the complex lies in strictly fewer degrees with respect to \(K\) than with
    respect to \(H\) (\cref{prop:tiltingimproves}). These are dimension--agnostic
    arguments that apply equally well to all settings --- in particular, the
    same proof that applies to smooth surfaces also works for (possibly
    singular) 3-folds.
\end{minipage}\hspace{0.03\textwidth}
\begin{minipage}[c]{0.4\textwidth}
        \includegraphics[width=\textwidth]{./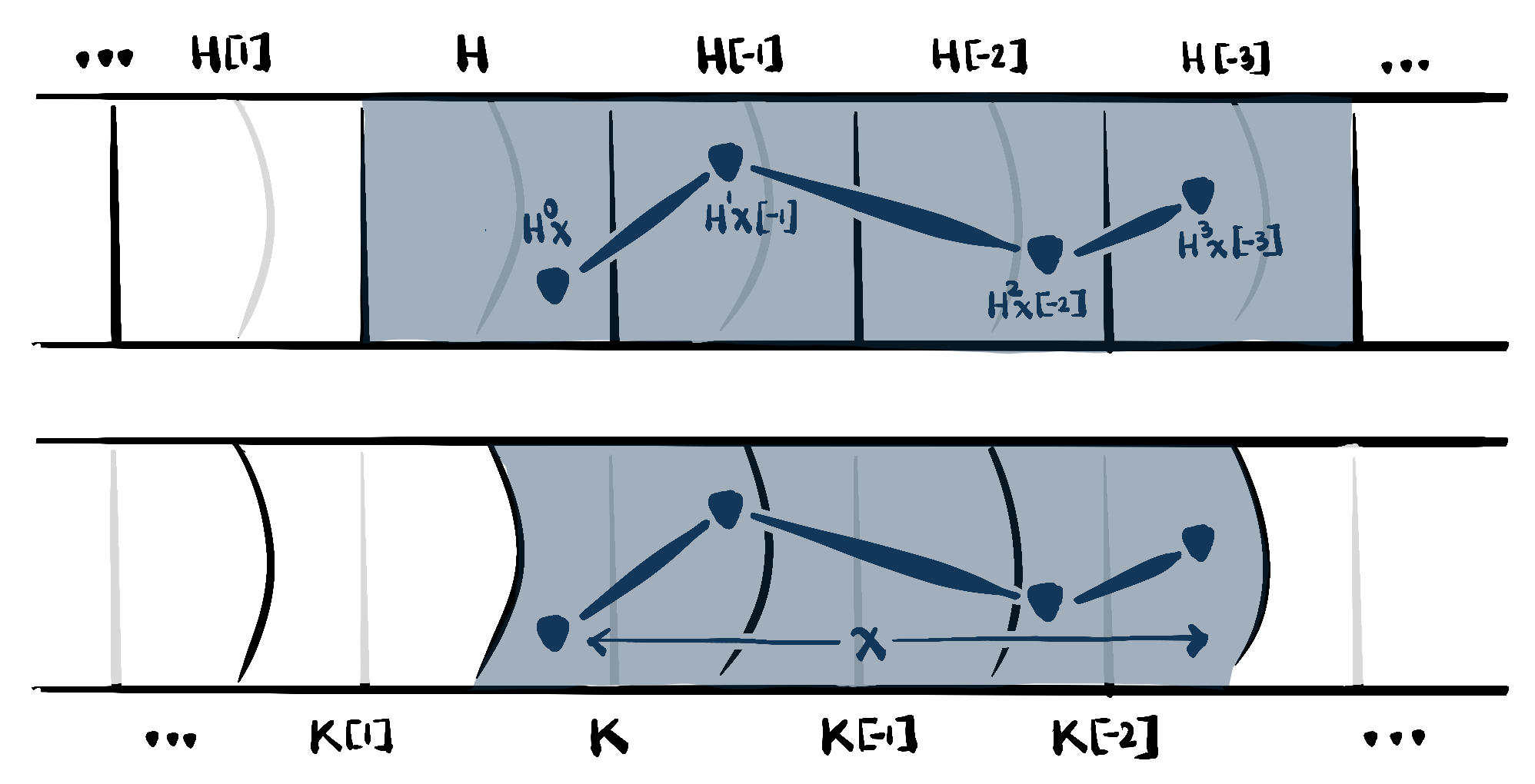}
\end{minipage}

Much unlike Hara--Wemyss' category however, mutation alone may not suffice to
reach the heart \(K\). Indeed there are complexes (such as
\(\OO_p\oplus\OO_q[1]\) for \(p,q\in X\)) which can only be improved by
tilting to a non-algebraic heart, and prima facie it is unclear how one
continues the induction from here. By considering the complete lattice of
\emph{all} intermediate hearts \cite[following][]{shimpiTorsionPairs3fold} and exploiting the irreducibility of \(C\), we obtain
precise cohomological information about objects which do not improve by mutation
(\cref{lem:collapse}), thus proving \cref{keythm:objclassification} in \cref{sec:thmAproof}.

\subsection*{Bricks and spheres} A classification of spherical objects, and
more generally semibricks (i.e.\ complexes whose self-extensions resemble
those of simple modules) then follows from \cref{keythm:objclassification}, the
classification of semibrick \(\Lambda_0\)--modules \cite[theorem
B]{shimpiTorsionPairs3fold}, and Donovan--Wemyss' analysis of simple
\(\Lambda_i\)-modules \cite[\S 1.2]{donovanStringyKahlerModuli}.

\begin{keytheorem}[(= \ref{cor:sbrickclassification})]
    \label{keycor:brickclassification}
    If a complex \(x\in \Db X\) supported on \(C\) satisfies
    \(\Hom^{<0}(x,x)=0\) and \(\Hom(x,x)=\bbC\), then up shifts and iterated
    applications of the functors appearing in \eqref{eqn:allfunctors} and their
    inverses, \(x\) is either
    \begin{enumerate}[(1)]
        \item[{\crtcrossreflabel{(b1)}[item:brickclass1]}] a simple
            \(\Lambda_i\)-module for some \(i\in \bbZ/N\bbZ\), or
        \item[{\crtcrossreflabel{(b2)}[item:brickclass2]}] a skyscraper sheaf at
            a closed point on \(X\), or on \(W\).
    \end{enumerate}
    In fact if \(x\) is described by \ref{item:brickclass1}, then up to shifts,
    mutation functors, and the Grothendieck duality functor, it is one of the
    sheaves \(\OO_C,\OO_{2C},...,\OO_{\ell C}\), or the unique
    non-split extension \(\mathcal{Z}\) of \(\OO_{2C}\) by \(\OO_{3C}\) which
    exists when \(\ell \geq 5\).
\end{keytheorem}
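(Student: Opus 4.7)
The plan is to combine \cref{keythm:objclassification} with the brick hypothesis $\Hom(x,x)=\bbC$ to refine each of \ref{item:objclass1}--\ref{item:objclass3} into \ref{item:brickclass1} or \ref{item:brickclass2}. If $x$ is a $\Lambda_i$-module as in \ref{item:objclass1}, the brick condition $\End(x)=\bbC$ forces it to be an indecomposable summand of a semibrick, and \cite[theorem B]{shimpiTorsionPairs3fold} (applied after transporting by mutation to $\Lambda_0$) singles out a single simple module --- case \ref{item:brickclass1}. If $x=\bigoplus_j\mathcal{E}_j[n_j]$ is a direct sum of shifts of zero-dimensional sheaves as in \ref{item:objclass2}, indecomposability collapses the sum to one summand in one degree; after a shift $x$ is a finite-length coherent sheaf with $\End=\bbC$, necessarily a skyscraper $\OO_p$ --- case \ref{item:brickclass2}.

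The remaining case \ref{item:objclass3} needs more care. When $x$ is a coherent sheaf, I would split by $\dim\Supp x$: a $0$-dimensional brick is a skyscraper, giving \ref{item:brickclass2}; a positive-dimensional one becomes under the Van den Bergh equivalence a brick $\Lambda_0$-module, so by the previous paragraph it lies in the mutation orbit of a simple --- case \ref{item:brickclass1}. When instead $x$ is a Yoneda $2$-extension in $\Ext^2(\mathcal{G},\mathcal{F})$ with $\Hom(\mathcal{G},\mathcal{F})=0$ and $\dim\Supp\mathcal{G}=0$, I would use the defining triangle
\[
    \mathcal{F}\to x \to \mathcal{G}[-1] \to \mathcal{F}[1]
\]
and apply $\Hom(-,x)$ to compute $\End(x)$; I expect the brick condition --- together with the vanishing of $\Hom(\mathcal{G},\mathcal{F})$ and the zero-dimensional support of $\mathcal{G}$ --- to force either $\mathcal{F}=0$ or $\mathcal{G}=0$, reducing $x$ to one of the already-handled sub-cases.

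Finally, the ``in fact'' clause invokes the enumeration of simple $\Lambda_i$-modules in \cite[\S 1.2]{donovanStringyKahlerModuli}: under VdB$^{-1}$ and the mutation functors $\Phi_i$ these simples correspond precisely to the structure sheaves $\OO_{nC}$ for $n=1,\dots,\ell$, together with the extension $\mathcal{Z}$ when $\ell\geq 5$, while Grothendieck duality intertwines the two crepant resolutions $X$ and $W$. I expect the $2$-extension sub-case of \ref{item:objclass3} to be the main obstacle: the delicate cohomological bookkeeping of $\End(x)$, simultaneously leveraging $\Hom(\mathcal{G},\mathcal{F})=0$ and the $0$-dimensional support of $\mathcal{G}$, is where the brick condition must do substantive work to rule out otherwise plausible indecomposables.
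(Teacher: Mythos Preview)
Your overall strategy matches the paper's: apply \cref{keythm:objclassification} and refine each case under the brick hypothesis. Cases \ref{item:objclass1} and \ref{item:objclass2} are handled essentially as you describe (the paper re-derives the \(\Lambda_i\)-module case via the tilt classification \cref{thm:tiltclassification} rather than citing \cite[theorem B]{shimpiTorsionPairs3fold} directly, but the content is the same), and your expectation that in the genuine two-term sub-case of \ref{item:objclass3} the brick condition forces \(\mathcal{F}=0\) or \(\mathcal{G}=0\) is exactly what the paper proves via the long exact sequence and \cref{lem:skyscraperdetection}.

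The gap is in your treatment of the residual case of \ref{item:objclass3} where \(x\) is (a shift of) a coherent sheaf with one-dimensional support. You assert that under Van den Bergh's equivalence such a sheaf becomes a \(\Lambda_0\)-module, reducing to \ref{item:objclass1}. This is false: \(\text{VdB}(\coh X)\) is a non-trivial \emph{tilt} of \(\rflmod\Lambda_0\), not equal to it --- for instance \(\OO_C(-2)\) is a brick sheaf with \(\mathbf{R}^1\pi_\ast\neq 0\), hence does not lie in the perverse heart. So the reduction you propose does not go through, and since the two-term sub-case feeds back into this one (when \(\mathcal{G}=0\)), the entire analysis of \ref{item:objclass3} is left incomplete.

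The paper closes this gap by a lattice-theoretic compactness argument. Once one knows \(x\in \coh X[1]\), one revisits the heart \(\ol K\) of \cref{prop:tiltingimproves}: by construction \(\ol K\geq \coh X[1]\), and equality is impossible because the torsion class \(\coh X[1]\cap H\) is the union of a strictly increasing chain in \(\tors(H)\) (as in the proof of \cref{lem:collapse}), whereas \(\ol K\cap H\) is by definition the \emph{minimal} torsion class containing the single object \(\HH^0(x)\). Hence \(\ol K>\coh X[1]\) strictly, and inspection of \cref{thm:tiltclassification} shows any such heart must be algebraic, so \(x\in\ol K\) genuinely reduces to \ref{item:objclass1}. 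This step --- that a finitely generated torsion class cannot equal the geometric one --- is the substantive missing idea in your proposal.
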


Evidently skyscraper sheaves at closed points are not spherical, i.e.\ any
spherical object must arise from a simple \(\Lambda_i\)-module as in the above
result. A direct computation \cite[corollary 7.5]{donovanStringyKahlerModuli}
reveals that when \(X\) is a smooth 3-fold, a spherical object can only arise
(as in \cref{keycor:brickclassification}) from the sheaves \(\OO_{\ell C}\) or
\(\mathcal{Z}\), and whether these are spherical depends on the specific
geometry. For instance \(\mathcal{Z}\) cannot be spherical unless \(\ell=5\),
and \(\OO_{\ell C}\) is spherical if and only if the associated top
Gopakumar--Vafa invariant is \(1\). In the special case when \(X\) is the
conifold resolution (i.e.\ when \(X\dashrightarrow W\) is the Atiyah flop), this
provides a purely algebro-geometric proof of the fact that every spherical
object is in the orbit of \(\OO_C\) under standard autoequivalences --- a statement
that was first proved by Keating--Smith \cite[theorem
1.3]{keatingSymplectomorphismsSphericalObjects2024} via symplectic dynamics on
the mirror manifold.

\subsection*{Tilting complexes, algebraic t-structures, and stability
conditions} Bricks detect Morita equivalences such as \eqref{eqn:allfunctors} by
picking out images of simple modules; traditionally such concerns
are addressed via projectives instead by considering \emph{tilting objects} ---
these are complexes \(t\in \Db X\) which satisfy \(\Hom^i(t,t)=0\) whenever
\(i\neq 0\) and generate the subcategory of perfects, thus inducing a derived equivalence between \(X\) and \(\End(t)\) \cite{rickardMoritaTheoryDerived}.

Indeed Van den Bergh \cite{vandenberghThreedimensionalFlopsNoncommutative}
constructs the equivalence \(\text{VdB}:\Db X\to \Db \Lambda_0\) by identifying
an indecomposable vector bundle \(\mathscr{N}\in \Coh X\) with globally
generated dual, such that the object \(\mathscr{N}\oplus \OO_X =\text{VdB}^{-1}(\Lambda_0)\) is tilting. Replacing either summand with its left or right
approximation by the other (i.e. \emph{mutating}) constructs four more tilting objects, and hence four equivalences out of \(\Db \Lambda_0\) --- these are the functors \(\Phi_0^\pm,\Phi_{-1}^\pm\) in \eqref{eqn:allfunctors}. The remaining
equivalences are likewise obtained by systematically propagating the mutation,
thus enumerating a class of tilting objects that are \emph{basic} (i.e.\ have
non--isomorphic indecomposable summands) and sit in a tetravalent
exchange graph. We show that there are no more such complexes.

\begin{keytheorem}[(= \ref{cor:tiltingclassification})]
    \label{keythm:tiltingclassification}
    Every basic tilting complex in \(\Db X\) can be obtained from a shift of Van den
    Bergh's tilting bundle \(\mathscr{N}\oplus \OO_X\) by finitely many left or
    right mutations in indecomposable summands. Equivalently, every basic tilting
    complex of \(\Lambda_i\)--modules (\(i\in \bbZ/N\bbZ\)) can be obtained by
    iteratively mutating indecomposable summands of \(\Lambda_i[n]\) for some
    \(n\in \bbZ\).
\end{keytheorem}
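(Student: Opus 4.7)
The plan is to reduce the classification of basic tilting complexes to a classification of simple--minded collections of bricks, for which \cref{keycor:brickclassification} is the essential input. Since Van den Bergh's equivalence identifies $\mathscr{N}\oplus\OO_X$ with $\Lambda_0$ and faithfully intertwines mutations of indecomposable summands, it suffices to treat the algebraic formulation: every basic tilting complex $t\in\Db\Lambda_0$ arises from some shift $\Lambda_0[n]$ by iterated mutation at indecomposable summands.

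To such a $t$, set $\Lambda=\End(t)$ and attach the simple--minded collection $\mathcal{S}(t)\subset\Dfl\Lambda_0$: the transport of the finite--length simple $\Lambda$-modules (those supported at $\mathbf{0}\in\Spec R$) along the derived equivalence $\Db\Lambda\simeq\Db\Lambda_0$. A Koenig--Yang style correspondence, in the module--finite Noetherian form developed in \cite{shimpiTorsionPairs3fold}, realises a bijection between basic tilting complexes in $\Db\Lambda_0$ modulo shift and simple--minded collections in $\Dfl\Lambda_0$, under which mutation of an indecomposable summand of $t$ matches mutation of the corresponding brick in $\mathcal{S}(t)$. So it suffices to pin down the possible $\mathcal{S}(t)$ up to shifts and the functors of \eqref{eqn:allfunctors}.

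The members of $\mathcal{S}(t)$ are pairwise Hom--orthogonal bricks in $\Db\Lambda_0$ supported on $C$, so by \cref{keycor:brickclassification} each is, after shifts and the helix functors of \eqref{eqn:allfunctors}, either a simple $\Lambda_k$-module or a skyscraper $\OO_p$ at a closed point $p\in C$. The critical step is to show that the skyscraper case cannot occur inside a valid finite SMC: heuristically, skyscrapers $\OO_p$ form a one-parameter family of mutually Hom--orthogonal bricks sharing the same Grothendieck class and the same orthogonality pattern against other bricks supported on $C$, so the length heart induced by $\mathcal{S}(t)$ would contain a continuous family of simples --- impossible since that heart is a length category over a finite--dimensional algebra. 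Making this rigorous will lean on the algebraic $t$-structure lattice of \cite{shimpiTorsionPairs3fold} and on the intermediate tilts constructed in the proof of \cref{keythm:objclassification}.

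Once $\mathcal{S}(t)$ is identified, modulo shifts and the $\Phi_i$'s, with the standard simples of some $\Lambda_k$, the Koenig--Yang correspondence returns $t$, up to the same operations, as the shifted regular module $\Lambda_k[n]$. Since each $\Phi_i$ is itself realised as an iterated mutation of summands of Van den Bergh's tilting complex \cite{wemyssFlopsClustersHomological,donovanStringyKahlerModuli}, this exhibits $t$ as an iterated indecomposable--summand mutation of $\Lambda_0[n]$, completing the proof. The main obstacle is the rigidity argument in the third paragraph: because the candidate bricks already form a continuous family with a single Grothendieck class, a discrete $K_0$--counting argument does not suffice, and the precise list of forbidden mixed collections must be extracted from the full $t$-structure classification of \cite{shimpiTorsionPairs3fold}.
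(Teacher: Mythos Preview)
Your outline follows the same architecture as the paper's proof: pass from a basic tilting complex $t$ to the algebraic heart (equivalently, simple--minded collection) it induces on $\Dfl\Lambda_0$, classify that, and recover $t$ as the basic projective generator of the corresponding big heart. A few points of comparison and correction are worth noting.

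First, the step you flag as the ``main obstacle'' --- ruling out skyscrapers from a simple--minded collection --- is much easier than your continuous--family heuristic suggests. The paper's \cref{cor:sbrickclassification} already classifies \emph{finite semibricks}, showing any such collection is (up to mutation) either a set of simple $\Lambda_i$--modules or a set $\{\OO_{p_0}[n_0],\ldots,\OO_{p_m}[n_m]\}$ at finitely many closed points. The latter cannot be a simple--minded collection for the trivial reason that its thick closure consists of complexes supported on $\{p_0,\ldots,p_m\}$, a proper subcategory of $\Dm X$. No rigidity or $\KK$--theory argument is needed, and in particular you should not invoke ``finite--dimensional algebra'': $\End(t)$ is module--finite over the complete local ring $R$, hence infinite--dimensional over $\bbC$.

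Second, where you invoke a Koenig--Yang correspondence from \cite{shimpiTorsionPairs3fold}, the paper instead uses \cite[lemma~5.7]{hiranoStabilityConditions3fold} directly: the small heart $\rflmod\Lambda\subset\Dfl\Lambda$ determines the big heart $\rmod\Lambda\subset\Db\Lambda$ via the orthogonality $\rmod\Lambda[\geq 0]=\{x\mid \Hom^{<0}(x,\rflmod\Lambda)=0\}$. Once $\Psi(\rflmod\Lambda)=\Phi(\rflmod\Lambda_i)[n]$ is known, this immediately upgrades to $\Psi(\rmod\Lambda)=\Phi(\rmod\Lambda_i)[n]$, and $t=\Psi(\Lambda)$ is pinned down as the unique basic projective generator $\Phi(\Lambda_i)[n]$. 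This sidesteps any need to verify that mutation of summands matches mutation of SMCs.

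In short: your plan is correct and matches the paper's, but you can streamline it by citing the algebraic--heart classification (the last clause of \cref{cor:sbrickclassification}) rather than rebuilding it from the brick classification, and by using the Hirano--Wemyss lemma in place of a Koenig--Yang bijection.
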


The classical tilting theory toolkit excels at populating a list of tilting
objects out of a single such instance, it however generally falls short of
identifying when such a list is exhaustive (notable exceptions are the cases of
Dynkin preprojective algebras and Donovan--Wemyss' contraction algebras, where
Aihara--Mizuno \cite{aiharaClassifyingTiltingComplexes} and August
\cite{augustFinitenessDerivedEquivalence} leverage the finiteness of involved
\(g\)--fans to obtain a complete classification.)
\Cref{keythm:tiltingclassification} exhibits the first class of \(g\)--tame (in
particular, silting--indiscrete) algebras for which the issue can be settled,
namely two--vertex contracted affine preprojective algebras and certain
modification algebras (algebras in these classes arise as \(\Lambda_i\)s when \(\dim Z=2\)
or \(3\) respectively.)

The proof is a \emph{consequence} of a classification of Morita equivalences,
more precisely we identify collections that could be the images of simple
modules under the inclusion of a bounded heart \(\rmod\Lambda\into \Db
\Lambda\isoto\Db X\) for some module--finite \(R\)--algebra \(\Lambda\). Any
such \(R\)--linear equivalence (including the functors in
\eqref{eqn:allfunctors}) restricts to the full subcategories supported on the
singular point \(\mathbf{0}\in Z\), i.e.\ the subcategories
\[\Dm X = \{x\in \Db X\;|\; \mathop\text{Supp}(x)\subseteq C\},\qquad
\Dfl\Lambda=\{x\in \Db \Lambda\;|\; \text{each }\HH^i(x)\in \rmod\Lambda\text{
has finite length}\}.\]
The natural heart \(\rflmod\Lambda=\Dfl\Lambda\cap\rmod\Lambda\) is the
extension--closure of its finitely many simples, whose images in \(\Dm X\) form
a \emph{simple--minded collection} which can be tracked and identified using
\cref{keythm:objclassification}.

\begin{keytheorem}[(= \ref{cor:sbrickclassification})]
    \label{keythm:heartclassification}
    Given any bounded t-structure with algebraic (i.e.\ Artinian and Noetherian) heart \(H\subseteq\Dm X\), there is an
    index \(i\in \bbZ/N\bbZ\) and equivalence \(\Dm X\to \Dfl\Lambda_i\)
    composed of functors appearing in \eqref{eqn:allfunctors} and their inverses
    that restricts to an equivalence between \(H\) and a shift of the full
    subcategory \(\rflmod\Lambda_i\) of finite length \(\Lambda_i\)-modules.
\end{keytheorem}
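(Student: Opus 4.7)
The plan is to identify $H$ via the Koenig--Yang correspondence between bounded algebraic t-structures and simple-minded collections (SMCs), then invoke \cref{keycor:brickclassification}. Concretely, the set $\mathcal{S} = \{s_1, \ldots, s_k\}$ of simple objects of $H$ is finite (by Hom-finiteness of $\Dm X$ and finite generation of $\KK_0(\Dm X)$) and forms an SMC; each $s_i$ is a brick with $\Hom^{<0}(s_i, s_l) = 0$ for all $i, l$, so it satisfies the hypotheses of \cref{keycor:brickclassification}. That theorem supplies, for each $s_i$, a shift $n_i$ and a composition $G_i$ of functors from \eqref{eqn:allfunctors} sending $s_i$ to either a simple $\Lambda_{j_i}$-module or a skyscraper on $X$ or $W$.

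Two steps remain. First, I would rule out the skyscraper alternative: the classes \ref{item:brickclass1} and \ref{item:brickclass2} are disjoint under the functors in \eqref{eqn:allfunctors}, so a skyscraper simple would already obstruct the sought equivalence $H \simeq \rflmod \Lambda_j[n]$ (whose simples are all of type (b1)). To turn this observation into an actual contradiction, I would exploit the positive-dimensional family $\{\OO_q\}_{q \in C}$: an algebraic heart with $\OO_p$ as a simple must describe each nearby $\OO_q$ via a finite-length filtration whose composition factors are in $\mathcal{S}$, but the continuity of the family together with $\Ext^1(\OO_p, \OO_p) \neq 0$ forces such filtrations to vary continuously in $p$, conflicting with the discreteness of the finite-length structure on $H$. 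Hence every $s_i$ falls in class \ref{item:brickclass1}.

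Second, and this is the main obstacle, I would reconcile the per-brick compositions $G_i$ into a single global $F$ sending every $s_i$ simultaneously to a simple of $\rflmod \Lambda_j$ in the same cohomological degree. I would proceed inductively using SMC-mutation --- which HRS-tilts the heart at one simple and which is realised globally by the mutation functors $\Phi_i$ of \eqref{eqn:allfunctors}. Starting with $F = G_1$, at each stage I mutate at a summand $F(s_l)$ that is not yet a standard simple of $\rflmod \Lambda_{j_1}$, and a spread-reduction argument modelled on \cref{prop:tiltingimproves} guarantees that a suitable complexity measure on $F(\mathcal{S})$ decreases; finiteness of $\mathcal{S}$ then ensures termination. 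Once all $F(s_i)$ are simples of $\rflmod \Lambda_j$, the vanishing $\Hom^{<0}(F(s_i), F(s_l)) = 0$ forces them to share a common cohomological degree, exhibiting $F(\mathcal{S})$ as the standard SMC of $\rflmod \Lambda_j[n]$ and hence $F(H) = \rflmod \Lambda_j[n]$ via Koenig--Yang.
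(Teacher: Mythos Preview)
Your plan has a genuine gap at the reconciliation step, and the paper's argument sidesteps it by a trick you are missing: rather than applying \cref{keycor:brickclassification} to each simple $s_i$ separately, the paper forms the direct sum $x = s_1 \oplus \cdots \oplus s_k$ and applies \cref{keythm:objclassification} to $x$ as a whole. The semibrick axioms give $\Hom^{<0}(x,x)=0$, so Theorem~A produces a \emph{single} composition $\Phi$ of functors from \eqref{eqn:allfunctors} under which $x$ (hence every $s_i$ simultaneously) lands in one of the three shapes \ref{item:objclass1}--\ref{item:objclass3}. In the module case \ref{item:objclass1} one still must upgrade ``all $\Phi(s_i)$ are $\Lambda_j$-modules'' to ``all $\Phi(s_i)$ are simple $\Lambda_j$-modules after further mutation''; this is done by tilting $\rflmod\Lambda_j$ at the minimal torsion class containing $\Phi(x)$, where the $\Phi(s_i)$ become simple, and then invoking the classification \cref{thm:tiltclassification} of such tilts. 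The skyscraper case \ref{item:objclass2} is excluded for a simple-minded collection because shifted skyscrapers at finitely many points lie in a proper thick subcategory (equivalently, span a rank-one sublattice of $\KK(\Dm X)\cong\bbZ^2$), so cannot generate.

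By contrast, your step~4 as written does not obviously terminate: mutating at $F(s_l)$ to make it standard can move the previously-standard $F(s_1)$ out of $\rflmod\Lambda_{j_1}$, and you have not exhibited a complexity measure on the whole tuple $F(\mathcal{S})$ that strictly decreases under such moves. The appeal to a ``spread-reduction argument modelled on \cref{prop:tiltingimproves}'' is not enough, since that proposition controls a single object's cohomological spread, not a collection's joint position relative to a fixed heart. Your step~3 continuity argument is also soft; the clean obstruction is the thick-generation one above. The moral is that the orthogonality conditions defining a semibrick are exactly what make the direct sum eligible for Theorem~A, so you should exploit them at the outset rather than brick-by-brick.
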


Another key consequence of \cref{keythm:heartclassification} is the
connectedness of the manifold parametrising Bridgeland stability conditions on
\(\Dm X\), an object of interest not least because it allows for an analysis of
the autoequivalence group of \(\Db X\). Such analyses were carried out by
Bridgeland \cite{bridgelandStabilityConditionsKleinian} and Hirano--Wemyss
\cite{hiranoStabilityConditions3fold}, where they found that all stability
conditions arising from the standard hearts \(\rflmod\Lambda_i\subset \Dfl
\Lambda_i\) (\(i\in \bbZ/N\bbZ\)) lie in a single connected component which is
preserved by autoequivalences composed of the mutation functors
\eqref{eqn:allfunctors}.

Then \cref{keythm:heartclassification} implies that a stability condition
outside this component, if it exists, must only correspond to t-structures with
non--algebraic hearts, in particular admitting no gaps in its phase distribution. By using \cref{keycor:brickclassification} to explicitly control
which phases can be occupied by any collection of stable objects, we preclude
the possibility of such a stability condition existing and deduce the following.

\begin{keytheorem}[(= \ref{cor:stabconnected})]
    \label{keythm:stabconnected}
    Given any locally finite Bridgeland stability condition \((Z,P)\) on \(\Dm
    X\), there is a phase \(\varphi\in \bbR\) such that the heart
    \(P\left(\varphi, \varphi+1\right]\) coincides with \(\Phi
    (\rflmod\Lambda_i)\), for some index \(i\in \bbZ/N\bbZ\)
    and an equivalence \(\Phi:\Db\Lambda_i\to \Db X\) composed of the functors
    in \eqref{eqn:allfunctors} and their inverses. In particular the stability
    manifold \(\StabX\) is connected.
\end{keytheorem}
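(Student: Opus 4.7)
The plan is to reduce the theorem to Theorem~\ref{keythm:heartclassification}: I will show that every locally finite stability condition $(Z,P)$ on $\Dm X$ admits some heart $H = P(\varphi, \varphi+1]$ that is algebraic, whereupon Theorem~\ref{keythm:heartclassification} furnishes an equivalence composed of the mutation functors in \eqref{eqn:allfunctors} identifying $H$ with a shift of $\rflmod\Lambda_i$. The connectedness statement then follows because the set of stability conditions sharing any fixed algebraic heart is an open connected subset of $\StabX$, and the collection of such subsets corresponding to shifts of the $\rflmod\Lambda_i$ has been shown in \cite{bridgelandStabilityConditionsKleinian, hiranoStabilityConditions3fold} to lie in a single component preserved by the autoequivalences induced by \eqref{eqn:allfunctors}.

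Two preliminary observations set the stage. First, local finiteness implies that each slice $P(\psi)$ is of finite length, so its simples --- the stable objects of phase $\psi$ --- are bricks in $\Dm X$ satisfying $\Hom^{<0}(s,s) = 0$ from the Harder--Narasimhan formalism. Theorem~\ref{keycor:brickclassification} then forces each such $s$, up to shifts and the functors of \eqref{eqn:allfunctors}, to be either a simple $\Lambda_i$-module (drawn from $\OO_{C}, \OO_{2C}, \ldots, \OO_{\ell C}, \mathcal{Z}$ and their Grothendieck duals) or a skyscraper sheaf on $X$ or $W$. Second, by a standard argument, algebraicity of $H = P(\varphi, \varphi+1]$ amounts to the absence of infinite accumulation in the phase distribution on $(\varphi, \varphi+1]$ --- equivalently, the existence of only finitely many stable objects in $H$, which then play the role of composition factors.

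It remains, and this is the main obstacle, to exclude the scenario in which $P(\varphi, \varphi+1]$ is non-algebraic for every $\varphi$. In this case the set of occupied phases is dense in $\bbR$, producing infinite sequences of stable bricks $s_1, s_2, \ldots$ at strictly ordered accumulating phases, subject to $\Hom(s_m, s_n) = 0$ whenever the phase of $s_m$ exceeds that of $s_n$. I would extract a contradiction via Theorem~\ref{keycor:brickclassification}: the skyscrapers $\OO_p$ share a single numerical class (hence a single phase), so almost all $s_n$ must come from the orbits --- under shifts and the mutations of \eqref{eqn:allfunctors} --- of the finitely many simple $\Lambda_i$-modules. These orbits trace out a discrete subset of the finite-rank lattice $K_0(\Dm X)$ via a finite-dimensional integral representation of the mutations on $K$-theory, so for any fixed central charge $Z$ only finitely many orbit classes have $Z$-image in any bounded region of $\bbC$; the phase accumulation therefore demands orbit representatives of unbounded $K$-norm. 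The closing blow should come from combining this growth with the explicit formulas for mutation on $K_0$ and the Hom-vanishing constraints among the $s_n$ --- I expect these together to force a definite lower bound on phase-separations, contradicting density and completing the reduction to the algebraic case handled in the first paragraph.
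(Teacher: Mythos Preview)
Your overall strategy matches the paper's: show some heart \(P(\varphi,\varphi+1]\) is algebraic, then invoke \cref{keythm:heartclassification}. The paper likewise begins by noting that stable objects are bricks and hence constrained by \cref{keycor:brickclassification}. Where your argument becomes incomplete is the step that actually produces the phase gap.

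The paper's mechanism is sharper than what you sketch. From the brick classification it extracts that the \(\KK\)-class of any stable object is a \emph{restricted affine root} in \(\KK(\Dm X)\cong\bbZ^{2}\), and such roots define a hyperplane arrangement in \(\bbR^2\) that is locally finite away from the origin. Consequently the set of phases \(\arg Z(\alpha)\) as \(\alpha\) ranges over roots has only a \emph{discrete} set of accumulation points (namely the phases of \(\pm Z(\delta)\)), so a gap exists somewhere. Your final paragraph instead aims for a global lower bound on phase separations via Hom-vanishing, but no such bound exists --- phases genuinely do accumulate at the imaginary-root directions, and it is only the discreteness of the accumulation locus that saves you. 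The Hom-vanishing constraints you invoke are not used in the paper and it is not clear they yield anything here.

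Two further points. First, your claim that ``only finitely many orbit classes have \(Z\)-image in any bounded region'' presupposes that \(Z\) has discrete image in \(\bbC\); the paper explicitly treats the non-discrete case separately (the image then lies on a real line, forcing all phases to a single residue class mod \(1\), and local finiteness gives algebraicity of \(P(\varphi+1)\)). Second, even in the discrete-image case, passing from a phase gap to algebraicity of the heart is not automatic --- the paper cites \cite[lemma 4.4]{bridgelandStabilityConditionsK3} for this, which is more than a tautology and is what your ``standard argument'' would need to name.
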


\subsection*{Faithful monodromy on K\"ahler moduli} Viewing \(\Db X\) as
the topological B-model of a superconformal field theory
\cite[see][]{bridgelandSpacesStabilityConditions}, one expects to recover a class
of symmetries \(G\subseteq \Aut(\Db X)\) via variation of the complexified K\"ahler
structure \cite[\S 4]{aspinwallPointsPointView} on a conjectured moduli space
modelled as a submanifold of \(G\backslash\StabX/\bbC\).

This proposal was brought to fruition
\cite{bridgelandStabilityConditionsKleinian,todaStabilityConditionsCrepant,hiranoStabilityConditions3fold,donovanStringyKahlerModuli}
by taking \(G\) to be the subgroup of \(R-\)linear Fourier--Mukai
equivalences (that preserve a distinguished connected component of
\(\StabX\))\footnote{\label{fn:connected}This clause is now redundant in
light of \cref{keythm:stabconnected}.} and computing that (said connected component of)\footnotemark[1] the double quotient \(G\backslash\StabX/\bbC\) is an \((N+2)\)-punctured
sphere, a space reasonably declared to be the \emph{stringy K\"ahler moduli
space} \(\SKMS\).

The system of functors \eqref{eqn:allfunctors} can then be seen as a
representation of the fundamental groupoid \(\Pi_1(\SKMS)\) with \(N+2\) basepoints as in the figure below, with \(G\) arising from the fundamental group at \(\Db X\).  Faithfulness of this representation is equivalent to simply--connectedness of \(\StabX\), and is hence intertwined with long--standing
questions on contractibility of stability manifolds.

\begin{figure}[H]
    \begin{minipage}[c]{0.4\textwidth}
    \caption{
        Labelling basepoints (marked {\color{red}\xmark}) with categories, and
        homotopy classes of paths with functors as shown gives a representation
        of the fundamental groupoid \(\Pi_1(\SKMS, N+1)\).\newline\newline Loops
        at a fixed basepoint then give autoequivalences of the label, for
        instance the anti-clockwise monodromy of \(\Db X\) around the north pole
        is given by the functor \(\text{VdB}^{-1}\circ \Phi_{-1}\circ ... \circ
        \Phi_0\circ \text{VdB}\), computed to coincide with the tensor--action
        of a generator of \(\Pic X\).
        Likewise monodromy around the south pole recovers the action of \(\Pic
        W\), and monodromies around the equatorial punctures correspond to twist
        functors in bricks described in
        \protect{\cref{keycor:brickclassification} \ref{item:brickclass1}
        \cite[\S 6.3]{donovanStringyKahlerModuli}}.
        }\label{fig:skms} \end{minipage}\hspace{-2em}
    \begin{minipage}[c]{0.7\textwidth}
        \begin{annotationimage}{width=\textwidth}{./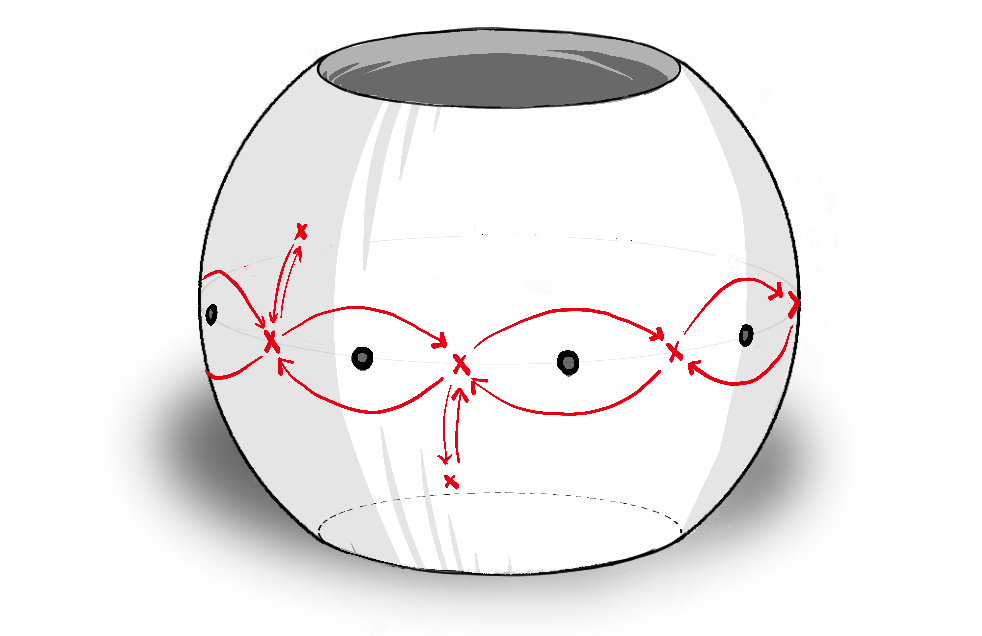}
            \draw[coordinate label = {{\tiny\(\Db\!\!\Lambda_0\)} at (0.51,0.43)}];
            \draw[coordinate label = {{\tiny\(\Db\!\!\Lambda_1\)} at (0.725,0.45)}];
            \draw[coordinate label = {{\tiny\(\Db\!\!\Lambda_2\)} at (0.845,0.52)}];
            \draw[coordinate label = {{\tiny\(\Db\!\!\Lambda_{\text{--}1}\)} at (0.32,0.46)}];
            \draw[coordinate label = {{\tiny\(\Db\! X\)} at (0.49,0.25)}];
            \draw[coordinate label = {{\tiny\(\Db\! W\)} at (0.34,0.64)}];
            \draw[coordinate label = {{\tiny\(\Phi_0\)} at (0.59,0.535)}];
            \draw[coordinate label = {{\tiny\(\Phi_0\)} at (0.59,0.315)}];
            \draw[coordinate label = {{\tiny\(\Phi_{\text{--}1}\)} at (0.37,0.54)}];
            \draw[coordinate label = {{\tiny\(\Phi_{\text{--}1}\)} at (0.37,0.325)}];
            \draw[coordinate label = {{\tiny\(\Phi_{\text{--}2}\)} at (0.195,0.39)}];
            \draw[coordinate label = {{\tiny\(\Phi_{1}\)} at (0.76,0.58)}];
            \draw[coordinate label = {{\tiny\(\Phi_{1}\)} at (0.76,0.37)}];
            \draw[coordinate label = {{\tiny\(\text{VdB}\)} at (0.268,0.57)}];
            \draw[coordinate label = {{\tiny\(\text{VdB}\)} at (0.485,0.32)}];
        \end{annotationimage}
    \end{minipage}
\end{figure}

Both questions --- of faithfulness and of contractibility --- can then be settled using Roberts--Woolf's upcoming work \cite{robertsContractibleStability}, where
they claim that each connected component of \(\Stab(\mathscr{T})\) admits a
contracting flow whenever \(\mathscr{T}\) is a triangulated category with
Grothendieck group \(\bbZ^{\oplus 2}\). \Cref{keythm:stabconnected},
combined with Roberts--Woolf's claim, thus proves the folklore conjecture recorded below for convenience.

\begin{keyconjecture}
    The stability manifold of \(\Dm X\) is homotopy equivalent to a point, and
    the quotient \(\StabX/\bbC\) is the universal cover of \(\SKMS\).
    Consequently the group \(G\) of \(R\)--linear Fourier--Mukai
    autoequivalences of \(\Db X\) is isomorphic to the fundamental
    group of \(\SKMS\), i.e.\ a free
    group on \(N+1\) generators.
\end{keyconjecture}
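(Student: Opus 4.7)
The plan is to derive the conjecture as a direct consequence of \cref{keythm:stabconnected} combined with Roberts--Woolf's contractibility result \cite{robertsContractibleStability}. The argument breaks into three essentially independent ingredients: contractibility of \(\StabX\), identification of \(\StabX/\bbC\) as the universal cover of \(\SKMS\), and a standard deck--transformation computation of \(G\).

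First I would verify that the numerical Grothendieck group of \(\Dm X\) is free abelian of rank two, generated (for instance) by the classes \([\OO_C]\) and \([\OO_p]\) for a closed point \(p\in C\); this is immediate from the fact that each \(\Lambda_i\) is a two--vertex algebra, giving \(\KK_0(\Dfl\Lambda_0)\cong \bbZ^{\oplus 2}\), and the equivalence in \eqref{eqn:allfunctors} transports this to \(\Dm X\). Feeding this rank--two hypothesis together with \cref{keythm:stabconnected} into Roberts--Woolf's theorem produces a contracting flow on the unique connected component of \(\StabX\), which proves the first assertion.

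For the second assertion I would note that \(\bbC\) acts freely and properly on \(\StabX\) via the usual rotation of phases and rescaling of central charges, so the quotient \(\StabX/\bbC\) is again contractible and in particular simply connected. Under the construction recalled in the introduction, the group \(G\) of \(R\)--linear Fourier--Mukai autoequivalences acts properly discontinuously on \(\StabX/\bbC\) with quotient \(\SKMS\); crucially, the clause in the definition of \(G\) that previously restricted attention to a distinguished connected component is now vacuous by \cref{keythm:stabconnected}, so no stability condition is lost to the quotient. Consequently \(\StabX/\bbC \to \SKMS\) is the universal cover.

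The final assertion is then pure covering--space theory: \(G\) acts as the deck transformation group of a universal cover, so \(G\cong \pi_1(\SKMS)\), and since \(\SKMS\) is an \((N+2)\)-punctured sphere this group is free on \(N+1\) generators. The main obstacle is really external to this paper --- the argument is contingent on Roberts--Woolf's forthcoming contractibility claim. Once that is in hand, the remaining steps are entirely formal and the substantive geometric input is wholly packaged in \cref{keythm:stabconnected}.
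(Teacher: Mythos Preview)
Your proposal is correct and matches the paper's own approach. Note that the paper records this as a \emph{conjecture} rather than a theorem precisely because the argument is contingent on Roberts--Woolf's forthcoming work; the paper's entire ``proof'' is the single sentence immediately preceding the conjecture, asserting that \cref{keythm:stabconnected} combined with Roberts--Woolf's contractibility claim settles it, and you have simply unpacked this sentence with the standard covering--space formalities.
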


\subsection*{Acknowledgements} Many thanks to Michael Wemyss and Wahei Hara for
generously sharing their insights which guided this project; to Jon Woolf and
Stefan Roberts for explaining their work on stability spaces; and to Nick Rekuski and Timothy De Deyn for helpful comments.

\subsection*{Funding} The author was supported by ERC Consolidator Grant 101001227 (MMiMMa).

\subsection*{Open Access} For the purpose of open access, the author has applied a Creative Commons Attribution (CC:BY) licence to any Author Accepted Manuscript version arising from this submission.

\section{The taxonomy of objects}\label{sec:thmAproof}
Let \(\pi:X\to Z\) be the crepant contraction of an irreducible curve \(C\subset X\) as in the introduction,
and fix a complex \(x\in \Db X\) that is supported on \(C\) and satisfies
\(\Hom^i(x,x)\coloneq \Hom(x,x[i])=0\) whenever \(i<0\).

To prove \cref{keythm:objclassification}, we pass across Van den Bergh's
equivalence \(\text{VdB}=\RHom(\mathscr{N}\oplus \OO_X,-)\) \cite[see e.g.][\S 2.2]{donovanStringyKahlerModuli}. It is convenient to suppress \(\text{VdB}\) from the notation, making the identifications \(\Db X \simeq \Db \Lambda_0\) and \(\Dm X\simeq
\Dfl\Lambda_0\) and treating \(x\) as an object of either category. Thus working
with respect to the standard heart \(H=\rflmod\Lambda_0\) in \(\Dfl\Lambda_0\),
each cohomology object \(\HH^i(x)\in H\) is a finite--length
\(\Lambda_0\)--module, and \(x\) is an extension of the objects
\(\big\{\HH^i(x)[-i]\;|\; i\in \bbZ\big\}\). In particular, if \(a\leq b\) are
integers such that \(\HH^i(x)=0\) whenever \(i\notin [-b,-a]\), then \(x\) lies
in the full subcategory \(H[a,b]=\big\langle H[b] \cup H[b-1] \cup ... \cup
H[a] \big\rangle\), where \(\langle - \rangle\) denotes the
extension--closure.

\begin{notation}
    We write \(x\in H\llbracket a,b]\) to mean \(x\in H[a,b]\) and \(x\notin
    H[a+1,b]\) (equivalently, \(x\in H[a,b]\) and \(\HH^{-a}(x)\neq 0\)). Other
    variations such as \(x\in H[a,b\rrbracket\) and \(x\in H\llbracket
    a,b\rrbracket\) are defined likewise. The notation naturally extends when
    measuring cohomologies with respect to other bounded hearts \(K\subset \Dfl
    \Lambda_0\).
\end{notation}

Now we may, up to replacing \(x\) with a shift thereof, assume that \(x\) lies
in \(H\llbracket 0 , n\rrbracket\) for some integer \(n\geq 0\) (called the
\emph{spread} of \(x\).) If the equality \(n=0\) held true then \(x\in H\) would
be a finite length \(\Lambda_0\)--module as described in
\cref{keythm:objclassification} \ref{item:objclass1}, so it suffices to only consider
the case \(n>0\) and attempt to reduce the spread by updating \(H\). Our primary tool to accomplish this is \emph{tilting in
torsion pairs} \`a la Happel--Reiten--Smal\o\ \cite{happelTiltingAbelianCategories}.

\begin{lemma}
    \label{lem:spreadanalysis}
    Suppose \(T\subseteq H\) is a torsion class, corresponding to the torsion-free
    class \(F\subseteq H\) and tilted heart \(K=\langle F[1]\cup T\rangle\).
    \begin{enumerate}[(i)]
        \item The torsion class \(T\) contains the top cohomology \(\HH^0(x)\)
            if and only if \(x\) lies in \(K\llbracket 0, n]\). \label{item:spread1}
        \item The torsion-free class \(F\) contains the bottom cohomology
            \(\HH^{-n}(x)\) if and only if \(x\) lies in \(K[-1,n-1\rrbracket\)
            \label{item:spread2}
    \end{enumerate}

    \begin{proof}
        By construction, \(K\) lies in \(H[0,1]\) and hence \(H\) (in
        particular, each cohomology \(\HH^{-i}(x)\)) lies in \(K[-1,0]\). Now \(x\)
        is an extension of the objects \(\HH^{-i}(x)[i]\in K[i-1,i]\) for \(i\in
        [0,n]\), of which all except \(\HH^0(x)\) clearly lie in \(K[0,n]\).
        Thus if \(\HH^0(x)\) lies in \(T\subseteq K\), then \(\HH^0(x)\) (and hence
        also \(x\) itself) lies in \(K[0,n]\). It is also clear that in this
        case \(x\) cannot lie in \(K[1,n]\subset H[1,n+1]\), since
        \(\HH^0(x)\neq 0\) by assumption. Hence \(x\) lies in \(K\llbracket 0,
        n]\).

        Conversely if \(x\) lies in \(K\llbracket 0, n]\), then \(\Hom(x,k)=0\)
        for all \(k\in K[<0]\) from whence it follows (e.g.\ by truncating to
        the coaisle of \(H\)) that \(\Hom(\HH^0x,
        k)=0\) for all \(k\in K[<0]\). Thus \(\HH^0(x)\) lies in \(K[\geq 0]\),
        and hence in \(T=K[\geq 0]\cap H\). This proves \ref{item:spread1}; the
        proof of \ref{item:spread2} is similar.
    \end{proof}
\end{lemma}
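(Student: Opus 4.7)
The plan is to reduce both parts to the standard Happel--Reiten--Smal\o\ bookkeeping: the identities \(T=H\cap K\), \(F[1]=H[1]\cap K\), together with the aisle inclusions \(H\subseteq K[-1,0]\) and \(K\subseteq H[0,1]\), do essentially all of the work. The two parts are dual, so I would focus on (i) and indicate the adaptation for (ii) only at the end.

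For the forward direction of (i), I would write \(x\) as an iterated extension of its shifted \(H\)-cohomologies \(\HH^{-i}(x)[i]\) for \(i\in[0,n]\). Using \(H\subseteq K[-1,0]\), each piece with \(i\geq 1\) lies in \(H[i]\subseteq K[i-1,i]\subseteq K[0,n]\). The top piece \(\HH^0(x)\in H\) lies in \(K\) exactly when it lies in \(T\), by the intersection formula; so the hypothesis gives \(x\in K[0,n]\). Sharpness, i.e.\ \(x\notin K[1,n]\), follows because \(K[1,n]\subseteq H[1,n+1]\) via \(K\subseteq H[0,1]\), and the containment \(x\in K[1,n]\) would force \(\HH^0(x)=0\), contradicting \(x\in H\llbracket 0,n\rrbracket\).

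Conversely, assume \(x\in K\llbracket 0,n]\), so \(x\) lies in the \(K\)-aisle and \(\Hom(x,k)=0\) for every \(k\in K[<0]\). Applying \(\Hom(-,k)\) to the canonical \(H\)-truncation triangle \(\tau^H_{\leq -1}(x)\to x\to \HH^0(x)\to \tau^H_{\leq -1}(x)[1]\) and noting that \(\tau^H_{\leq -1}(x)\in H[1,n]\subseteq K[0,n]\) (whence its shift \(\tau^H_{\leq -1}(x)[1]\) sits even deeper in the \(K\)-aisle), one deduces \(\Hom(\HH^0(x),k)=0\). Hence \(\HH^0(x)\in K[\geq 0]\); combined with \(\HH^0(x)\in H\subseteq K[-1,0]\), this concentrates it in \(K\cap H = T\).

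Part (ii) runs dually: the forward direction uses \(F\subseteq K[-1]\), giving \(F[n]\subseteq K[n-1]\) to absorb the bottom cohomology, with sharpness again coming from \(K\subseteq H[0,1]\). For the converse, one applies \(\Hom(t[n],-)\) for \(t\in T\) to the bottom \(H\)-truncation \(\HH^{-n}(x)[n]\to x\to \tau^H_{>-n}(x)\to\), using that \(x\) lies in the \(K\)-coaisle to force \(\Hom(t,\HH^{-n}(x))=0\) for all \(t\in T\), which is exactly the statement \(\HH^{-n}(x)\in F\). There is no substantive obstacle anywhere; the whole argument is essentially an exercise in HRS formalism, with the only care required being to track which aisle or coaisle contains which object.
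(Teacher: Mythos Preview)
Your proposal is correct and follows essentially the same approach as the paper: both argue the forward direction by expressing \(x\) as an extension of its shifted \(H\)-cohomologies and using \(H\subseteq K[-1,0]\), and both argue the converse by using the \(H\)-truncation of \(x\) to transfer the vanishing \(\Hom(x,K[<0])=0\) to \(\HH^0(x)\). Your version simply spells out the truncation triangle explicitly where the paper says ``truncating to the coaisle of \(H\)''.
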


The Abelian category \(H\) is Artinian and Noetherian, so the inclusion order on torsion--free (resp.\ torsion) classes in \(H\) forms a \emph{complete
lattice} \(\torf(H)\) (resp.\ \(\tors(H)\)) which is (anti--)isomorphic to the
poset of tilted hearts
\[
    \big[H,H[1]\big]
    =
    \big\{K\text{ heart of a bounded t-structure}\;|\; H\leq K \leq H[1]\big\},
\]
where the latter is considered an interval in the set of all t-structures
partially ordered by coaisle--containment \cite[see e.g.][\S
2.2]{shimpiTorsionPairs3fold}. That is to say, we have naturally isomorphic
partial orders \(\tors(H)^\text{op}\simeq \big[H,H[1]\big]\simeq
\torf(H)\) that admit arbitrary suprema and infima. In particular there is a minimal
torsion class containing \(\HH^0(x)\) with corresponding tilted heart
\(\ol{K}\), and a minimal torsion--free class containing \(\HH^{-n}(x)\) with
corresponding tilted heart \(\ul{K}\). \Cref{lem:spreadanalysis} then
characterises these hearts in terms of the partial order as
\[
    \big[H, \ol{K}\,\big]
    = \left\{K\in \big[H,H[1]\big]\;\middle\vert\; x\in K\llbracket 0, n]\right\},
    \qquad
    \big[\ul{K}, H[1]\big]
    = \left\{K\in \big[H,H[1]\big]\;\middle\vert\; x\in K[-1, n-1\rrbracket\right\}.
\]

\begin{lemma}
    \label{prop:tiltingimproves}
    We have the inequality \(\ul K \leq \ol K\). In particular, the poset of
    t-structures on \(\Dm X\) contains a \emph{non-empty interval} \(\big[\ul K, \ol
    K\,\big]\) containing hearts \(K\) such that \(x\) lies in \(K\llbracket 0,
    n-1\rrbracket\).

    \begin{proof}
        The statement claims the containment of torsion classes \(\ol K\cap H
        \subseteq \ul K \cap H\), hence it suffices to show that \(\HH^0(x)\) is
        contained in \(\ul K\cap H=\orth(\HH^{-n}x)\). In other words, we must
        show \(\Hom(\HH^0x, \HH^{-n}x)=0\). But this readily follows from the
        hypothesis \(\Hom(x, x[-n])=0\), by truncating first to the aisle and
        then to the coaisle and noting that the truncation functors are adjoint
        to inclusions (alternatively, by using the spectral sequence
        \(E_2^{p,q}=\bigoplus_i \Hom^p(\HH^i x, \HH^{i+q}x)\Longrightarrow
        \Hom^{p+q}(x,x)\) to deduce that \(\Hom(x,x[-n])=\Hom(\HH^0x, \HH^{-n}x)\).)
    \end{proof}
\end{lemma}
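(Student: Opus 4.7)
The plan is to translate the inequality $\underline K \leq \overline K$ into a containment of torsion classes in $H$, and then derive that containment from the single $\Hom$-vanishing $\Hom(x,x[-n])=0$ (a special case of the standing hypothesis $\Hom^{<0}(x,x)=0$) via two truncation–inclusion adjunctions. The antiisomorphism $\tors(H)^{\textnormal{op}} \simeq [H, H[1]]$ recasts $\underline K \leq \overline K$ as the reverse inclusion $\overline K \cap H \subseteq \underline K \cap H$ of torsion classes: by construction $\overline K \cap H$ is the minimal torsion class containing $\HH^0(x)$, whereas $\underline K \cap H = \orth\HH^{-n}(x)$ is the maximal torsion class orthogonal to $\HH^{-n}(x)$. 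The inequality therefore reduces to the single vanishing $\Hom(\HH^0(x), \HH^{-n}(x)) = 0$.

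To establish this I would telescope through $\Hom(x, x[-n])$. Since $\HH^{-n}(x)$ lies in the coaisle of $H$, the adjunction between $\tau^{\geq 0}$ and the coaisle inclusion gives $\Hom(\HH^0 x, \HH^{-n} x) = \Hom(\tau^{\geq 0}x, \HH^{-n} x) = \Hom(x, \HH^{-n} x)$. Dually, because $x$ has cohomology in $[-n, 0]$ the shifted complex $x[n]$ has cohomology supported in degrees $\leq -n$; the adjunction between the inclusion of that aisle and its right adjoint $\tau^{\leq -n}$, combined with $\tau^{\leq -n}(x) = \HH^{-n}(x)[n]$, extends the chain
\[
\Hom(x, \HH^{-n} x) = \Hom(x[n], \tau^{\leq -n} x) = \Hom(x[n], x) = \Hom(x, x[-n]),
\]
which vanishes by hypothesis. (Note that only the single $\Hom^{-n}$ vanishing is used, not the full negative self-$\Hom$ vanishing.)

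Finally, for the ``in particular'' assertion, any $K$ in the now non-empty interval $[\underline K, \overline K]$ satisfies both conditions of \Cref{lem:spreadanalysis} simultaneously, so $x \in K\llbracket 0, n] \cap K[-1, n-1\rrbracket = K\llbracket 0, n-1\rrbracket$. The only real delicacy lies in the middle computation, where the two adjunctions run in opposite directions and a careless swap of aisle and coaisle would reverse an equality; as a sanity check one could invoke the Hom spectral sequence $E_2^{p,q}=\bigoplus_i \Hom^p(\HH^i x, \HH^{i+q}x)\Rightarrow \Hom^{p+q}(x,x)$ and observe that the corner term $E_2^{0,-n}=\Hom(\HH^0 x, \HH^{-n}x)$ admits no outgoing differentials (whose targets would involve $\HH^i(x)$ outside $[-n,0]$), hence survives to $E_\infty$ and injects into $\Hom^{-n}(x,x) = 0$.
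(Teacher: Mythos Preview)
Your proof is correct and follows essentially the same route as the paper: both reduce the inequality to the vanishing $\Hom(\HH^0x,\HH^{-n}x)=0$ via the identification $\ul K\cap H={}^\perp(\HH^{-n}x)$, then derive this vanishing from $\Hom(x,x[-n])=0$ using the two truncation--inclusion adjunctions (with the spectral sequence noted as an alternative). Your write-up is simply more explicit about the adjunction steps and additionally spells out the ``in particular'' clause via \cref{lem:spreadanalysis}, which the paper leaves implicit.
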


\subsection{Classifying two--term complexes} \Cref{prop:tiltingimproves} asserts
that the spread of \(x\) can \emph{always} be improved by tilting once, in
particular if \(x\) has spread \(n=1\) then it is an object in some tilted heart
\(K\in \big[H, H[1]\big]\). Such hearts are well--understood in terms of the
functors \eqref{eqn:allfunctors}, we briefly discuss the constructions
and classification.

The mutation functors can be defined from a \(\bbZ\)--indexed sequence of rigid
reflexive \(R\)--modules \(\{...,V_{-1},V_0,V_1,...\}\) ordered such that
\(V_{-1}\), \(V_0\) are pushforwards of \(\mathscr{N}\), \(\OO_X\) respectively, and any
three consecutive modules sit in an exchange sequence
\cite[see][\S 2.3]{donovanStringyKahlerModuli}. Writing \(M_i=V_{i-1}\oplus
V_{i}\), the modules \(\Hom_R(M_i,M_{i+1})\in \rmod\,(\End_RM_i)\) and
\(\Hom_R(M_{i+1},M_i)\in \rmod\,(\End_RM_{i+1})\) are both tilting and induce
\(R\)--linear equivalences
\[
    \Db (\End_RM_i)
    \xleftrightarrows[\quad\Psi_{i}\coloneq\RHom(\Hom_R(M_{i+1},M_i),-)\quad]{\quad\Psi_i\coloneq\RHom(\Hom_R(M_{i},M_{i+1}),-)\quad}
    \Db(\End_R M_{i+1})
\]
indexed over the mutated summand as in \cite[\S
3.2]{donovanStringyKahlerModuli}. Proposition 4.3 \emph{ibid} then exhibits the
inequalities of hearts \(\Psi_i\rflmod(\End_R M_{i+1}) < \rflmod(\End_R M_i) <
\Psi_i^{-1}\rflmod(\End_R M_{i+1})\) and further shows these relations are
\emph{covering} (i.e.\ the corresponding open intervals are empty.)

To recover the picture \eqref{eqn:allfunctors}
we note that the class group \(\mathop\text{Cl}(R)\cong \bbZ\) acts on the
sequence \((V_i)\), translating it in multiples of the helix period \(N\). In
particular we have canonical \(R\)--linear identifications
\(\beta_k:\End_RM_i\isoto \End_RM_{i+kN}\) for all \(i,k\in \bbZ\) (see \S 3.3
\emph{ibid}), so we declare \(\Lambda_i=\End_R(M_i)\) and \(\Phi_i=\Psi_i\) for
\(i\in \{0,...,N-1\}\) and observe that for any \(k\in \bbZ\) there is an equality
of functors \(\Psi_{i+kN}= \beta_{-k}\circ \Phi_i \circ \beta_k\)
\cite[lemma 7.3]{hiranoStabilityConditions3fold}. Reading off t-structures in
\(\Dfl\Lambda_0\) then recovers a part of the Hasse quiver of
\(\big[H,H[1]\big]\), given by
\begin{equation}
    \tag{!}
    \label{eqn:Hassequiv}
\begin{tikzcd}[row sep=tiny, column sep=tiny]
      &\Phi_0(\rflmod\Lambda_1)[1]\rar
      &\Phi_0\Phi_1(\rflmod\Lambda_2)[1]\rar
      &\cdots \rar
      &\Phi_{\text{-}1}^{\text{-}1}\Phi_{\text{-}2}^{\text{-}1}(\rflmod\Lambda_{\,\text{-}2})
       \rar
      &\Phi_{\text{-}1}^{\text{-}1}(\rflmod\Lambda_{\,\text{-}1})
       \arrow[dr, bend left=15]
      & \\
    H[1] \arrow[ur,bend left=15]\arrow[dr,bend right=15]
      &&&&&& H \\
      &\Phi_{\text{-}1}(\rflmod\Lambda_{\,\text{-}1})[1]\rar
      &\Phi_{\text{-}1}\Phi_{\text{-}2}(\rflmod\Lambda_{\,\text{-}2})[1]\rar
      &\cdots \rar
      &\Phi_0^{\text{-}1}\Phi_1^{\text{-}1}(\rflmod\Lambda_2)\rar
      &\Phi_0^{\text{-}1}(\rflmod\Lambda_1)\arrow[ur, bend right=15]
      &
\end{tikzcd}
\end{equation}
where the indices are modulo \(N\). In \cite[theorem
5.13]{shimpiTorsionPairs3fold}, numerical techniques are used to show that the
two decreasing chains starting at \(H[1]\) above have infima \(\text{VdB}(\coh
X)[1]\) and \(\Phi_0\circ \text{VdB}(\coh W)[1]\) respectively, so these geometric
hearts too lie in \(\big[H, H[1]\big]\). Further tilting the geometric hearts in
torsion classes containing only skyscrapers and extensions thereof gives a class
of `mixed geometric hearts' (see \S 5.3 \emph{ibid}) which are also tilts of
\(H\). The key result there is that this list is exhaustive and \(H\) has no
other tilts, with a short proof (pertinent to the single--curve case we are in)
sketched as \cite[Algorithm 1.1]{shimpiTorsionPairs3fold}.

\begin{theorem}[{\cite[Theorem A]{shimpiTorsionPairs3fold}}]
    \label{thm:tiltclassification}
    Let \(K\in \big[H ,H[1]\big]\) be the heart of a bounded t-structure in
    \(\Dfl\Lambda_0\), obtained by tilting \(H=\rflmod\Lambda_0\).
    Then \(K\) admits one of the following descriptions.
    \begin{enumerate}[(1)]
        \item[{\crtcrossreflabel{(h1)}[item:heartclass1]}]
            The heart \(K\) is the image of some standard
            heart \(\rflmod\Lambda_i\subset
            \Dfl\Lambda_i\) (\(i=\bbZ/N\bbZ\)) under a
            functor \(\Dfl\Lambda_i\to \Dfl \Lambda_0\) given by one of
            the composites
            \([1]\circ(\Phi_{mN-1}\cdots\Phi_{i+1}\Phi_i)\),
            \([1]\circ(\Phi_{-mN}\cdots\Phi_{i-2} \Phi_{i-1})\),
            \((\Phi_{mN+i-1}\cdots\Phi_1\Phi_0)^{-1}\), or
            \((\Phi_{i-mN}\cdots\Phi_{-2}\Phi_{-1})^{-1}\)
            (for some \(m\in \bbN\)).
        \item[{\crtcrossreflabel{(h2)}[item:heartclass2]}]
            Identifying \(\Dfl \Lambda_0\simeq \Dm X\) across the
            equivalence \(\textnormal{VdB}\), the heart \(K\) is a tilt of the
            standard heart \(\coh X= \Coh X \cap\Dm X\) in a (possibly
            trivial) torsion class containing no sheaves with one--dimensional
            support. In other words, there is a subset \(U\subseteq X\) such that
            \(K=\left\langle\{\OO_p\;|\;p\in U\}\cup
                \{\mathcal{F}[1]\;|\;\mathcal{F}\in \coh X,\; \Hom(\OO_p,
            \mathcal{F})=0\text{ for all }p\in U\}\right\rangle\).
        \item[{\crtcrossreflabel{(h3)}[item:heartclass3]}]
            Identifying \(\Dfl \Lambda_0\simeq \Dm W\) across the
            equivalence \(\Phi_0\circ\textnormal{VdB}\), the heart \(K\subset \Dm
            W\) is a tilt of the standard heart \(\coh W\) in
            a (possibly trivial) torsion class containing no sheaves with
            one--dimensional support. In other words, there is a subset
            \(U\subseteq W\) such that
            \(K=\left\langle\{\OO_p\;|\;p\in U\}\cup
                \{\mathcal{F}[1]\;|\;\mathcal{F}\in \coh W,\; \Hom(\OO_p,
            \mathcal{F})=0\text{ for all }p\in U\}\right\rangle\).
    \end{enumerate}
\end{theorem}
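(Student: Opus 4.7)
The plan is to exploit the complete lattice anti-isomorphism $\tors(H)^\textnormal{op} \simeq [H, H[1]]$ together with the two--vertex structure of $\Lambda_0$. Since the progenerator $M_0 = \mathscr{N}\oplus\OO_X$ has exactly two indecomposable summands, the category $H = \rflmod\Lambda_0$ contains exactly two simple modules $S_-, S_+$. The only minimal non-zero torsion classes in $H$ are therefore $\langle S_\pm\rangle$, so $H[1]$ covers exactly two hearts in $[H, H[1]]$. Using the exchange sequences defining the $\Psi_i$ and the covering relations $\Psi_i\rflmod(\End_R M_{i+1}) < \rflmod(\End_R M_i) < \Psi_i^{-1}\rflmod(\End_R M_{i+1})$ quoted before the statement, these two covers are identified with $\Phi_0(\rflmod\Lambda_1)[1]$ and $\Phi_{-1}(\rflmod\Lambda_{-1})[1]$.

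Iterating this combinatorial analysis yields the algebraic hearts (h1): each heart $\Phi_0\cdots\Phi_{i-1}(\rflmod\Lambda_i)[1]$ is the image of a two--simple module category and so covers exactly two hearts in $[H, H[1]]$, one recovering the previous step and the other producing $\Phi_0\cdots\Phi_i(\rflmod\Lambda_{i+1})[1]$. This inductively traces out the upper chain of \eqref{eqn:Hassequiv}, and a symmetric argument with negative mutations produces the lower chain. Accounting for the action of the class group $\textnormal{Cl}(R)\cong\bbZ$ on the sequence $(V_i)$ (parametrised by $m\in\bbN$ traversals of the helix period $N$) recovers all four shift variants listed in (h1).

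Both chains are infinite (since the algebras $\Lambda_i$ are $g$--tame rather than $g$--finite) and admit infima in the complete lattice $[H, H[1]]$. The critical technical step — and the main obstacle I anticipate — is to identify these two infima with the geometric hearts $\textnormal{VdB}(\coh X)[1]$ and $\Phi_0 \circ \textnormal{VdB}(\coh W)[1]$. One approach is to track the induced action of the mutation functors on the two--dimensional Grothendieck group $K_0(H)$: using the explicit presentations of $\Phi_i$ from \cite{donovanStringyKahlerModuli}, the $K$-theoretic classes of the successive torsion classes form a monotone sequence converging to the class generated by one--dimensional sheaves supported on $C$, forcing the infimum to coincide with the geometric heart on the nose. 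Membership of the geometric hearts in $[H, H[1]]$ itself follows once one checks that $H[0,1]$ contains every shift of a sheaf supported in dimension $\leq 1$ on $C$.

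Finally, any heart in $[H, H[1]]$ strictly below $\textnormal{VdB}(\coh X)[1]$ corresponds via the anti-isomorphism to a torsion class in $\coh X$ containing no sheaf of one--dimensional support: any such sheaf would generate further descent along the algebraic chain and contradict the position strictly below the infimum. Hence the torsion class is generated by a set of skyscrapers $\{\OO_p\,|\,p\in U\}$ for some $U \subseteq X$, and the tilt takes the form described in (h2). The symmetric argument for $W$ produces (h3), exhausting the possibilities for $K$.
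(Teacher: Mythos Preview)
The paper does not prove this theorem; it is quoted from \cite[Theorem~A]{shimpiTorsionPairs3fold}, and the preceding paragraph merely summarises the strategy there (numerical techniques to identify the chain infima with the geometric hearts, then an appeal to \cite[Algorithm~1.1]{shimpiTorsionPairs3fold} for exhaustiveness in the single--curve case). Your sketch matches the first half of that summary: trace the two mutation chains from \(H[1]\), then use \(\KK\)--theoretic convergence to pin their infima to \(\textnormal{VdB}(\coh X)[1]\) and \(\Phi_0\circ\textnormal{VdB}(\coh W)[1]\).

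The genuine gap is exhaustiveness. You construct the algebraic chains and the sub--geometric hearts below their infima, but you never show that every \(K\in[H,H[1]]\) falls into one of these classes. Your final paragraph opens with ``any heart in \([H,H[1]]\) strictly below \(\textnormal{VdB}(\coh X)[1]\)'' as though the trichotomy were already established, when in fact this is precisely the content of the cited Algorithm~1.1. Relatedly, your inductive claim that each algebraic heart ``covers exactly two hearts in \([H,H[1]]\), one recovering the previous step'' is not right as stated: the previous heart covers the current one rather than conversely, and the assertion that an intermediate algebraic heart has no \emph{further} neighbours in the Hasse quiver of \([H,H[1]]\) requires its own argument (the paper later invokes \cite[corollary~4.7]{shimpiTorsionPairs3fold} for exactly this, that algebraic hearts cannot cover or be covered by non--algebraic ones). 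Without closing off these two points --- no stray Hasse edges off the chains, and every non--algebraic heart lying below one of the two geometric infima --- the classification is not complete.
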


The following
consequence is immediate.

\begin{corollary}
    \label{cor:twotermclassification}
    If the complex \(x\in \Dfl\Lambda_0\) lies in \(H[0,1]\), then \(x\) adheres to one of the
    descriptions in \cref{keythm:objclassification}.

    \begin{proof}
        \Cref{prop:tiltingimproves} guarantees that \(x\)
        lies in some heart \(K\) obtained as a tilt of
        \(H=\rflmod\Lambda_0\). If \(K\) admits the description
        \ref{item:heartclass1} in \cref{thm:tiltclassification}, then
        \(K=\Phi\rflmod\Lambda_i\) for some \(i\in \bbZ/N\bbZ\) and \(\Phi\) a
        composite of shifts and mutation functors, hence \(\Phi^{-1}(x)\in
        \rflmod\Lambda_i\) is a \(\Lambda_i\)--module as in
        \cref{keythm:objclassification} \ref{item:objclass1}.

        Otherwise the heart \(K\) admits one of the descriptions
        \ref{item:heartclass2} or \ref{item:heartclass3}, say (without loss of
        generality) the former.  Thus \(K\) is a tilt of \(\coh X\) in a torsion
        class \(\langle \OO_p\;|\; p\in U\rangle\subset \coh X\), determined by
        some subset \(U\subset X\). It follows that the object \(x\in K\) can be
        written as an extension \(\mathcal{F}[1]\to x \to \mathcal{G}\to
        \mathcal{F}[2],\) where \(\mathcal{F}\) and \(\mathcal{G}\in \coh X\) are
        torsion--free (resp.\ torsion) with respect to the torsion pair relating
        \(K\) and \(\coh X\). Thus \(\mathcal{G}\in \langle \OO_p\;|\; p\in
        U\rangle\) has zero--dimensional support,
        \(\Hom(\mathcal{G},\mathcal{F})=0\), and \(x\) is determined by the
        morphism \(\mathcal{G}\to \mathcal{F}[2]\) seen as an element of
        \(\Ext^2(\mathcal{G},\mathcal{F})\).
    \end{proof}
\end{corollary}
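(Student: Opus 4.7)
With the spread-improvement result \cref{prop:tiltingimproves} and the classification of tilts \cref{thm:tiltclassification} already in hand, the proof reduces to a case analysis on the tilted heart witnessing $x$. The plan is first to place $x$ inside some heart $K\in\bigl[H,H[1]\bigr]$ as a degree-zero object, and then read off $x$ from the shape of $K$.

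First I would dispose of the trivial case: if $x\in H\llbracket 0,0\rrbracket$, then $x$ is already a finite-length $\Lambda_0$-module, landing directly in case \ref{item:objclass1}. So assume $x\in H\llbracket 0,1\rrbracket$, i.e.\ the spread is exactly $1$. Then \cref{prop:tiltingimproves} provides a non-empty interval $\bigl[\ul K,\ol K\bigr]$ of tilts, every one of which satisfies $x\in K\llbracket 0, 0\rrbracket$; in particular $x\in K$.

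Next I would invoke \cref{thm:tiltclassification} and split on the three possibilities for $K$. In case \ref{item:heartclass1}, $K=\Phi(\rflmod\Lambda_i)$ for some $i\in\bbZ/N\bbZ$ and some composite $\Phi$ of shifts and mutation functors from \eqref{eqn:allfunctors}, hence $\Phi^{-1}(x)$ is a $\Lambda_i$-module, giving case \ref{item:objclass1}. In case \ref{item:heartclass2}, identifying $\Dfl\Lambda_0\simeq \Dm X$ across $\text{VdB}$, the heart $K$ is the Happel--Reiten--Smal\o\ tilt of $\coh X$ in a torsion class $\langle \OO_p\mid p\in U\rangle$ of zero-dimensional sheaves. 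Consequently $x$ fits in a canonical triangle $\mathcal{F}[1]\to x\to \mathcal{G}\to \mathcal{F}[2]$ with $\mathcal{F},\mathcal{G}\in\coh X$, $\dim\Supp\mathcal{G}=0$, and $\Hom(\mathcal{G},\mathcal{F})=0$; the connecting map then realises $x$ as a Yoneda class in $\Ext^2(\mathcal{G},\mathcal{F})$ as in case \ref{item:objclass3}, with the degenerate subcases where $\mathcal{F}$ or $\mathcal{G}$ vanishes collapsing into \ref{item:objclass2} or the pure-sheaf subcase of \ref{item:objclass3}. Case \ref{item:heartclass3} is handled identically after replacing $X$ by $W$ and the equivalence by $\Phi_0\circ\text{VdB}$.

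The heavy lifting has been front-loaded into \cref{prop:tiltingimproves} and \cref{thm:tiltclassification}, so no substantive obstacle remains; the argument is essentially bookkeeping of torsion-pair data. The one step that deserves care is the identification, in cases \ref{item:heartclass2}--\ref{item:heartclass3}, between the HRS triangle for $x$ and the Yoneda extension description of \ref{item:objclass3} --- but this is standard, since the connecting morphism of a tilt is precisely the extension class in $\Ext^2$.
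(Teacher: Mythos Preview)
Your proposal is correct and follows essentially the same approach as the paper's proof: both place $x$ in a tilted heart $K$ via \cref{prop:tiltingimproves}, then case-split on \cref{thm:tiltclassification} to read off the description of $x$ from the torsion-pair data. Your explicit handling of the spread-zero case and of the degenerate subcases where $\mathcal{F}$ or $\mathcal{G}$ vanishes are minor elaborations that the paper leaves implicit, but the argument is otherwise identical.
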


\subsection{Induction, dream scenario} \label{subsec:dreaminduction} Given
\(x\in H\llbracket 0, n\rrbracket\) for \(n\geq 2\), we may hope
that the interval \(\big[\ul K, \ol K\,\big]\subset \big[H,H[1]\big]\) contains
a heart of the form \(\Phi H\) for some \emph{auto}equivalence \(\Phi:\Dfl
\Lambda_0\to \Dfl \Lambda_0\) of desired form, so that replacing \(x\) by
\(\Phi^{-1}(x)\in H\llbracket 0,n-1\rrbracket\) allows us to rinse and repeat
until \cref{cor:twotermclassification} kicks in.

And we are correct in hoping so --- the sub-poset of \(\big[H, H[1]\big]\)
containing only algebraic hearts is given by the Hasse quiver
\eqref{eqn:Hassequiv}, so unless the window \(\big[\ul K, \ol K\,\big]\) is
exceptionally narrow it is bound to contain an image of \(H=\rflmod\Lambda_0\).
In fact if we allow ourselves an additional tilt, then we can arrange for our
hopes to be true as long as \(\big[\ul K, \ol K\,\big]\) contains \emph{at least
one} algebraic heart, e.g.\ if either if the two end points are algebraic.

\begin{lemma}
   \label{lem:inductioncontinues}
   If either \(\ol K\) or \(\ul K\) is algebraic, i.e.\ fits the description
   \ref{item:heartclass1} in \cref{thm:tiltclassification}, then there is an
   autoequivalence \(\Phi:\Dfl \Lambda_0\to \Dfl \Lambda_0\) composed of shifts,
   mutation functors, and their inverses such that \(x\) lies in \(\Phi(H)[ 0,
   n-1]\).
\end{lemma}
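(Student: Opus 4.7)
Without loss of generality I assume $\ol K$ is algebraic, the case of $\ul K$ being symmetric after shifting by $[-1]$. Combining \cref{lem:spreadanalysis}\ref{item:spread1}, \ref{item:spread2} with \cref{prop:tiltingimproves} yields $x \in \ol K[0, n-1]$, so the task reduces to realising $\ol K$ as $\Phi(H)$ (up to the shift $[1]$) for some autoequivalence $\Phi$ of $\Dfl\Lambda_0$ composed of shifts and mutation functors.

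By \cref{thm:tiltclassification}\ref{item:heartclass1}, $\ol K = F(\rflmod\Lambda_i)[\epsilon]$ for some $i \in \bbZ/N\bbZ$, $\epsilon \in \{0, 1\}$, and a prescribed composite $F : \Dfl\Lambda_i \to \Dfl\Lambda_0$ of mutation functors. The easy case is $i \equiv 0 \pmod N$: then $F$ traverses an integer number of wrap-arounds of $\bbZ/N\bbZ$, so under the canonical identification $\beta$ it descends to a genuine autoequivalence $\Phi := [\epsilon] \circ F$ of $\Dfl\Lambda_0$ satisfying $\Phi(H) = \ol K$, and the lemma follows immediately.

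The harder case is $i \not\equiv 0 \pmod N$, where $F$ falls short of a full wrap and no such direct identification exists; this is precisely where I would invoke the \emph{additional tilt} foreshadowed in \cref{subsec:dreaminduction}. Concretely, I take $\Phi$ to be the autoequivalence corresponding to the maximal initial wrap-around segment of $F$ (post-composed with $[\epsilon]$), so that $\Phi(H)$ is the nearest ``$H$-node'' preceding $\ol K$ in the Hasse quiver \eqref{eqn:Hassequiv} --- at most $N - 1$ covering steps away. The crux of the proof is then to verify $x \in \Phi(H)[0, n-1]$, equivalently that $\Phi(H)$ still lies in $[\ul K, \ol K]$. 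The inequality $\Phi(H) \leq \ol K$ is immediate from the construction, yielding $x \in \Phi(H)[0, n]$ via \cref{lem:spreadanalysis}\ref{item:spread1}. The opposite inequality $\Phi(H) \geq \ul K$ --- equivalently that the torsion-free class of $\Phi(H)$ still contains $\HH^{-n}(x)$ --- is the main obstacle; I would argue it by a cover-counting analysis on the Hasse quiver, exploiting that the at most $N-1$ covering steps between $\Phi(H)$ and $\ol K$ are each simple tilts at individual simple modules, so the torsion-free class can fail to contain $\HH^{-n}(x)$ only if one of these specific simples is a direct summand of $\HH^{-n}(x)$. Should this failure occur at the boundary, one further simple tilt --- the ``additional tilt'' --- reinstates the property by absorbing that offending simple, landing in a nearby $H$-node that still satisfies the spread bound.
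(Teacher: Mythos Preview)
Your opening reduction is correct: by \cref{prop:tiltingimproves} one has \(x\in \ol K\llbracket 0,n-1\rrbracket\), and when \(i\equiv 0\pmod N\) you are done. The problem is entirely in the case \(i\not\equiv 0\), and here your argument has a genuine gap.

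The assertion that the nearest \(H\)-node \(\Phi(H)\leq \ol K\) satisfies \(\Phi(H)\geq \ul K\) is false in general. The paper's own illustrative example immediately preceding the proof makes this plain: there \(\ul K=\ol K=\Phi_0(\rflmod\Lambda_1)[1]\) is a singleton interval, so for \(N\geq 2\) no \(H\)-node whatsoever lies in \([\ul K,\ol K]\). Your claimed criterion --- that the torsion-free class of \(\Phi(H)\) can fail to contain \(\HH^{-n}(x)\) only if one of the intermediate simples is a \emph{direct summand} of \(\HH^{-n}(x)\) --- is also incorrect; passage across a simple tilt removes from the torsion-free class every object admitting the relevant simple as a \emph{quotient}, not merely as a summand. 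Finally, your proposed repair (``one further simple tilt\ldots landing in a nearby \(H\)-node'') cannot work as stated, since a single simple tilt from an \(H\)-node lands on a \(\Lambda_{\pm 1}\)-node, not another \(H\)-node.

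The paper's proof avoids this trap by not searching for an \(H\)-node inside the possibly-degenerate interval \([\ul K,\ol K]\subset [H,H[1]]\). Instead it transports \(x\) across \(\Psi^{-1}\) to \(x'\in H'\llbracket 0,n-1\rrbracket\) where \(H'=\rflmod\Lambda_i\), and \emph{repeats} the tilting analysis there. The point is that \(\ul K'\leq \ol K{}'\) forces \([H',\ol K{}']\cup[\ul K',H'[1]]=[H',H'[1]]\), so \emph{every} heart in \([H',H'[1]]\) keeps the spread of \(x'\) bounded by \(n-1\) (up to a shift by \(0\) or \(-1\)). Since an increasing chain of algebraic hearts out of \(H'\) cycles through all residues modulo \(N\), it must contain some \(\Phi(\rflmod\Lambda_0)\), and that heart does the job. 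The ``additional tilt'' foreshadowed in \cref{subsec:dreaminduction} is this second full round of Happel--Reiten--Smal\o\ analysis in \([H',H'[1]]\), not a single extra simple mutation.
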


Before moving on to the proof we sketch an illustrative albeit contrived
example. Writing \(s_0,s_1\in \rflmod\Lambda_1\) for the two simple modules
indexed as in \cite[\S 3.1]{donovanStringyKahlerModuli}, the
non--zero cohomologies of \(x=\Phi_0s_0[3]\oplus\Phi_0s_1[1] \in
\Dfl\Lambda_0\) with respect to \(H\) are given by \(\HH^0(x)=\Phi_0s_1[1]\)
and \(\HH^{-3}(x)=\Phi_0s_0\). In particular \(\HH^0(x)\) is simple in \(H\) and
\(\HH^{-3}(x)\) generates the torsion--free class \(\HH^0(x)\orth\), so the only
heart in \(\big[H,H[1]\big]\) which improves the spread of \(x\in H\llbracket
0,3\rrbracket\) is \(\ul K=\ol K =\Phi_0(\rflmod\Lambda_1)[1]\).
But looking at \(x'=\Phi_0^{\text{-}1}(x)[-1]\in H'\llbracket 0,2\rrbracket\), where
\(H'=\rflmod\Lambda_1\), it is easy to see that further tilting \(H'\) in any
torsion class containing \(s_1\) will not worsen the spread of \(x'\). Thus
\(x'\) has spread \(\leq 2\) with respect to each of the hearts
\(\Phi_0(\rflmod\Lambda_0)[1]\),
\(\Phi_0\Phi_{\text{-}1}(\rflmod\Lambda_{\text{-}1})\),
\(\Phi_0\Phi_{\text{-}1}\Phi_{\text{-}2}(\rflmod\Lambda_{\text{-}2})\), ... It
follows that each of the objects \((\Phi_0\Phi_{\text{-}1}...
\Phi_{N\text{-}1})^{-k}\circ \Phi_{0}^{\text{-}1}\Phi_{0}^{\text{-}1}x[-2]\)
(\(k\geq 0\)) lies in \(H[0,2]\).

\begin{proof}[Proof of \cref{lem:inductioncontinues}]
       The hypothesis guarantees that there is a heart
       \(\Psi(\rflmod\Lambda_i)\in \big[H,H[1]\big]\), where \(\Psi\) is
       composed of shifts and (inverse) mutation functors, such that \(x\) lies in
       \(\Psi(\rflmod\Lambda_i)\llbracket 0,n-1\rrbracket\). Equivalently,
       \(x'=\Psi^{-1}(x)\) lies in \(H'\llbracket 0, n-1\rrbracket\) where
       \(H'=\rflmod\Lambda_i\).

       Now the same argument as in \cref{lem:spreadanalysis} shows that the
       poset of tilted hearts \(\big[H',H'[1]\big]\) contains intervals
        \[
            \big[H', \ol{K}{}'\,\big]
            = \left\{K\in \big[H',H'[1]\big]\;\middle\vert\; x\in K\llbracket 0,
            n-1]\right\},
            \qquad
            \big[\ul{K}', H'[1]\big]
            = \left\{K\in \big[H',H'[1]\big]\;\middle\vert\; x\in K[-1,
            n-2\rrbracket\right\}
        \]
        defined using the top and bottom cohomologies of \(x'\) with respect to
        \(H'\). The vanishing of \(\Hom^{<0}(x',x')\) implies the
        inequality \(\ul K' \leq \ol K{}'\) as in \cref{prop:tiltingimproves}.

        This implies that the union \(\big[H', \ol K{}'\big]\cup \big[\ul K',
        H'[1]\big]\) contains an increasing chain from \(H'\), with
        each \(K\) in the chain satisfying \(x'\in K[m, m+n-1]\) for some
        \(m\in\{0,-1\}\). Since an algebraic heart cannot cover or be covered by a
        non-algebraic heart \cite[corollary 4.7]{shimpiTorsionPairs3fold} and
        the Hasse quiver of algebraic hearts in \(\big[H',H'[1]\big]\) is
        identical to \eqref{eqn:Hassequiv} with indices shifted up by
        \(i\), we see that the chain must contain some \(K\) of the form
        \(\Phi(\rflmod\Lambda_0)\) where \(\Phi\) is composed of mutation
        functors. Then \(\Phi^{-1}(x')[-m]=\Phi^{-1}\Psi^{-1}(x)[-m]\)
        lies in \((\rflmod\Lambda_0)[0,n-1]\) as required.
\end{proof}

\subsection{Proverbial spanners in the inductive works} We now address complexes
for which the induction outlined in \cref{subsec:dreaminduction} fails to
continue, i.e.\ complexes \(x\in H\llbracket 0,n\rrbracket\) (\(n\geq 2\)) for
which both \(\ul K\) and \(\ol K\) are non-algebraic and fit the descriptions
\ref{item:heartclass2} or \ref{item:heartclass3} in
\cref{thm:tiltclassification}.

Note that hearts described in \ref{item:heartclass2} are clearly incomparable
(under the coaisle--containment order) with those in \ref{item:heartclass3}, so
replacing \((x, \Lambda_0, X)\) with \((\Phi_{\text{-}1}^{\text{-}1}x,
\Lambda_{\text{-}1}, W)\) if
necessary we may assume the hearts \(\ul K, \ol K\) are both (without loss of
generality) tilts of \(\coh X\) in some torsion classes containing only sheaves
with zero--dimensional support.

We show in this case that \(x\) fits the description \ref{item:objclass2} in
\cref{keythm:objclassification}, i.e.\ decomposes into shifts of sheaves
contained in the extension--closure \(\langle \OO_p\;|\;p\in C\rangle\). It is
classical \cite[see e.g.][\href{https://stacks.math.columbia.edu/tag/00KY}{tag
00KY}]{stacks-project} that such sheaves (henceforth dubbed
\emph{zero--dimensional sheaves}) can be characterised by the property
that their support is zero--dimensional, equivalently the support (being closed
in an irreducible curve) has non-empty complement in \(C\).  Support can
detected by looking at morphisms from skyscrapers (and extensions thereof) as follows.

\begin{lemma}
    \label{lem:easyskyscraperdetection}
    Suppose non-zero sheaves \(u,y\in \coh X\) (supported within \(C\)) are such
    that \(u\in \langle \OO_p\;|\; p\in X\rangle\), and either \(\Hom(y,u)=0\) or
    \(\Hom(u,y)=\Ext^1(u,y)=0\) holds. Then \(\dim(\Supp y)=0\).

    \begin{proof}
        The conclusion is immediate from the first hypothesis since if \(y\) was
        supported everywhere on \(C\) then for any \(p\in \Supp(u)\) we would
        have a non-zero morphism \(y\onto \OO_p\into u\).

        To deduce this from the second hypothesis, suppose for the sake of
        contradiction that \(\dim(\Supp y)=1\) and
        \(\Hom(u,y)=\Ext^1(u,y)=0\). Thus in particular \(\sheafHom(u,y)=0\)
        where we note that the homomorphism sheaf is supported on a finite set
        of closed points \(\Supp(u)\) so vanishes if and only if its module of
        global sections does.

        Likewise we can deduce (e.g.\ from the Grothendieck spectral sequence associated to
        \(\HH^0(X,-) \circ \mathbf{R}\curlyHom(u,-)\)) that
        \(\HH^0(X,\sheafExt^1(u,y))=\Ext^1(u,y)=0\), and hence
        \(\sheafExt^1(u,y)=0\).

        Taking stalks at \(p\in \Supp(u)\) we thus compute the equality of
        \(\OO_{X,p}-\)modules \(\Hom(u_p,y_p)=\Ext^1(u_p,y_p)=0\), here the
        stalks \(u_p,y_p\) are finitely generated modules over the local ring
        \(\OO_{X,p}\) and \(u_p\) is supported on the maximal ideal (in fact is
        filtered by the residue field.) It follows
        that \(\text{depth}(y_p)\geq 2\) \cite[theorem
        16.6]{matsumuraCommutativeRingTheory} but this is absurd since the depth
        of a module is bounded above by the dimension of its support
        \cite[\href{https://stacks.math.columbia.edu/tag/00LK}{tag
        00LK}]{stacks-project}.
    \end{proof}
\end{lemma}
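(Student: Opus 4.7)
The plan is to dispatch the two alternative hypotheses by quite different means: the first ($\Hom(y,u)=0$) admits a direct geometric construction, while the second ($\Hom(u,y)=\Ext^1(u,y)=0$) reduces, via sheaf Ext, to a local commutative algebra statement about depth at the finitely many points of $\Supp(u)$.

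For the first hypothesis I would argue by contrapositive. Suppose $\dim\Supp(y)\geq 1$. Since $\Supp(y)\subseteq C$ is a nonempty closed subset of the one-dimensional irreducible curve $C$, it must equal $C$, and in particular contains the finite set $\Supp(u)$. Pick any $p\in\Supp(u)$: then $y$ admits $\OO_p$ as a quotient (its stalk $y_p$ surjects onto the residue field by Nakayama), while $u$ admits $\OO_p$ as a subsheaf (extracted from the socle of the nonzero finite-length stalk $u_p$). Composing the surjection $y\onto\OO_p$ with the inclusion $\OO_p\into u$ produces a nonzero morphism $y\to u$, contradicting the hypothesis.

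For the second hypothesis I would first upgrade the global vanishing to a local one. Both $\sheafHom(u,y)$ and $\sheafExt^1(u,y)$ are supported on the finite set $\Supp(u)\cap\Supp(y)\subseteq\Supp(u)$ and therefore have no higher sheaf cohomology, so the Grothendieck spectral sequence for $\RHom(u,y)$ degenerates to force $\sheafHom(u,y)=\sheafExt^1(u,y)=0$. Passing to stalks at any $p\in\Supp(u)$ then yields $\Hom_{\OO_{X,p}}(u_p,y_p)=\Ext^1_{\OO_{X,p}}(u_p,y_p)=0$, with $u_p$ a nonzero finite-length module over the Noetherian local ring $\OO_{X,p}$. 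The standard characterisation of depth via Ext against finite-length modules then produces $\text{depth}(y_p)\geq 2$, which contradicts the general inequality $\text{depth}(y_p)\leq \dim\Supp(y_p)\leq\dim C=1$. Hence $\dim\Supp(y)$ cannot be one, and must be zero.

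I expect the depth-theoretic step to be the main obstacle: one must pass from vanishing of $\Ext^{\leq 1}(u_p,y_p)$, where $u_p$ is merely of finite length rather than the residue field itself, to the statement $\text{depth}(y_p)\geq 2$, traditionally phrased in terms of Ext against $\bbC=\OO_{X,p}/\mathfrak{m}$. The cleanest route is d\'evissage along a composition series of $u_p$, together with the general fact that the vanishing threshold of $\Ext^\bullet(M,y_p)$ is independent of the choice of nonzero finite-length $M$ supported at the maximal ideal. The remaining ingredients --- socle extraction in the first case and the spectral-sequence collapse in the second --- are essentially formal.
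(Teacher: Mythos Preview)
Your proposal is correct and follows essentially the same line as the paper's proof: the first hypothesis is handled by the contrapositive via the composite $y\onto\OO_p\into u$, and the second by collapsing the local-to-global spectral sequence (using that $\sheafHom$ and $\sheafExt^1$ have zero-dimensional support) to obtain $\Hom(u_p,y_p)=\Ext^1(u_p,y_p)=0$ and then invoking the depth bound. Your anticipated d\'evissage concern is exactly what the paper addresses by remarking that $u_p$ is filtered by the residue field, and the cited result (Matsumura, theorem 16.6) is stated for finite-length modules rather than just the residue field, so no extra work is needed.
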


The result above can be extended to objects of the heart
\(H=\text{VdB}^{-1}(\rflmod\Lambda_0)\).

\begin{lemma}
    \label{lem:skyscraperdetection}
    An object \(y\in H\) lies in the extension closure \(\langle \OO_p\;|\; p\in
    X\rangle\) if any of the following conditions hold.
    \begin{enumerate}[(i)]
        \item \(\Hom(u,y)=\Ext^1(u,y)=0\) for some non-zero \(u\in \langle
            \OO_p\;|\; p\in C \rangle\).
            \label{item:skyscraper1}
        \item \(\Hom(y,u)=\Ext^1(y,u)=0\) for some non-zero \(u\in \langle
            \OO_p\;|\; p\in C \rangle\).
            \label{item:skyscraper3}
        \item \(\Hom(u,y)=\Hom(y,v)=0\) for some (not necessarily
            distinct) non-zero \(u,v\in \langle \OO_p\;|\; p\in C \rangle\).
            \label{item:skyscraper2}
    \end{enumerate}

    \begin{proof}
        Recall \cite[see e.g.][\S
        3.1]{vandenberghThreedimensionalFlopsNoncommutative} that \(H\) (when
        considered a subcategory of \(\Dm X\simeq \Dfl\Lambda_0\)) coincides
        with the tilt \({}^0\!\operatorname{Per}(X/Z)\cap \Dm X\) of \(\coh X\),
        hence the object \(y\in H\) sits in an exact triangle
        \[
            y'[1] \to y \to y'' \to y'[2]
        \]
        where \(y',y''\in \coh X\) satisfy
        \(\pi_\ast(y')=\mathbf{R}^1\pi_\ast(y'')=0\) and
        \(\Hom(\OO_C(-1),y')=0\). In particular \(y'\) has no subsheaves with
        zero--dimensional support, i.e.\ \(\Hom(u,y')=0\) for all \(u\in
        \langle\OO_p\;|\; p\in C\rangle\) .

        For a non-zero \(u\in \langle \OO_p\;|\; p\in C\rangle\), a straightforward long exact sequence argument shows there is an inclusion
        \(\Ext^1\!(u,y')\subseteq \Hom(u,y)\), thus if either
        \ref{item:skyscraper1} or \ref{item:skyscraper2} holds then the
        hypothesis \(\Hom(u,y)=0\) implies \(\Ext^1\!(u,y')=0\) which
        cannot happen unless \(\dim(\Supp y')<1\) by
        \cref{lem:easyskyscraperdetection}. This
        necessarily implies \(y'=0\), and hence \(y=y''\) is a sheaf. The same
        lemma then lets us deduce the condition on \(\Supp(y)\) from either pair
        of hypotheses.

        On the other hand if \ref{item:skyscraper3} holds, then the vanishing of
        \(\Hom(y'',u)=\Hom(y,u)\) shows \(\Supp(y'')\) is disjoint from \(\Supp(u)\). The long exact sequence \(...\to
        \Ext^1\!(y,u)\to\Hom(y',u)\to \Ext^2\!(y'',u)\to...\) then shows
        \(\Hom(y',u)\) vanishes too, which can only happen if \(y'=0\) and hence
        \(y=y''\) is of the claimed form.
    \end{proof}
\end{lemma}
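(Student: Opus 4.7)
My plan is to reduce the statement to one about coherent sheaves via Van den Bergh's perverse t-structure and then apply \cref{lem:easyskyscraperdetection} repeatedly. The first step uses the identification of $H$ with ${}^0\!\operatorname{Per}(X/Z)\cap \Dm X$, which presents each $y\in H$ as the middle term of an exact triangle
\begin{equation*}
y'[1]\to y \to y'' \to y'[2]
\end{equation*}
with $y',y''\in \coh X$ subject to $\pi_\ast(y')=0=\mathbf{R}^1\pi_\ast(y'')$ and $\Hom(\OO_C(-1),y')=0$. As a preliminary I would establish that $y'$ has no non-zero subsheaf of zero-dimensional support: any such subsheaf would contain some skyscraper $\OO_p$ in its socle, and $\OO_p$ receives a surjection from the line bundle $\OO_C(-1)$, so the composition would produce a non-zero $\OO_C(-1)\to y'$, contradicting the perverse vanishing. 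Hence $\Hom(u,y')=0$ for every non-zero $u\in\langle\OO_p\mid p\in C\rangle$.

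Cases \ref{item:skyscraper1} and \ref{item:skyscraper2} share the hypothesis $\Hom(u,y)=0$ and I handle them simultaneously. Applying $\Hom(u,-)$ to the triangle and using $\Ext^{-1}(u,y'')=0$ produces an injection $\Ext^1(u,y')\hookrightarrow \Hom(u,y)=0$. Combined with the preliminary $\Hom(u,y')=0$, \cref{lem:easyskyscraperdetection} forces $\dim(\Supp y')=0$, and the no-subsheaves property then gives $y'=0$, so that $y=y''\in\coh X$. The remaining hypothesis in each case --- the vanishing of $\Ext^1(u,y)$ in \ref{item:skyscraper1}, or of $\Hom(y,v)$ in \ref{item:skyscraper2} --- transfers directly to $y''$, and a second application of \cref{lem:easyskyscraperdetection} delivers $\dim(\Supp y)=0$.

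Case \ref{item:skyscraper3} is formally dual. Applying $\Hom(-,u)$ to the triangle and using $\Hom(y,u)=\Ext^1(y,u)=0$ extracts both $\Hom(y'',u)=0$ and an injection $\Hom(y',u)\hookrightarrow \Ext^2(y'',u)$. The former entails $\Supp(y'')\cap \Supp(u)=\emptyset$: any common point $p$ would lie in the socle of $u$ (since non-zero Artinian sheaves have socles supported at every point of their support), giving a non-zero composite $y''\twoheadrightarrow \OO_p\hookrightarrow u$. Disjointness of supports then kills $\sheafExt^\bullet(y'',u)$ and hence $\Ext^2(y'',u)$, so $\Hom(y',u)=0$, and the preliminary together with \cref{lem:easyskyscraperdetection} forces $y'=0$. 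Now $y=y''$ is a sheaf with $\Hom(y,u)=0$, so \cref{lem:easyskyscraperdetection} closes the argument.

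The substantive obstacle is the preliminary claim about the absence of zero-dimensional subsheaves of $y'$; the remainder is a sequence of routine long-exact-sequence chases against \cref{lem:easyskyscraperdetection}. A mild secondary check is the support-vs-socle dictionary used in case \ref{item:skyscraper3}, which reduces to the standard fact that a non-zero Artinian module over a local ring has non-zero socle.
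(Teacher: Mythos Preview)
Your proof is correct and follows essentially the same route as the paper: decompose $y$ via the perverse t-structure into the triangle $y'[1]\to y\to y''\to y'[2]$, establish that $y'$ admits no zero-dimensional subsheaves, then chase the long exact sequences in $\Hom(u,-)$ (for cases \ref{item:skyscraper1} and \ref{item:skyscraper2}) or $\Hom(-,u)$ (for case \ref{item:skyscraper3}) against \cref{lem:easyskyscraperdetection} to force $y'=0$ and conclude. Your write-up is in fact slightly more explicit than the paper's in case \ref{item:skyscraper3}, spelling out why disjoint supports kill $\Ext^2(y'',u)$, which the paper leaves implicit.
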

\medskip

We are now sufficiently equipped to witness the decomposition of \(x\) --- the
constraints on \(\ol K\) show the cohomology object \(\HH^0(x)\in
H\) surjects onto a skyscraper sheaf, which we then exploit against \cref{lem:skyscraperdetection}.

\begin{lemma}
    \label{lem:collapse}
    Suppose \(x\in H\llbracket 0,n\rrbracket\) (\(n\geq 2\)) is such that
    the hearts \(\ul K,\ol K\) are both tilts of \(\coh X\) in (possibly
    trivial) torsion classes containing sheaves with zero--dimensional support.
    Then each cohomology object \(\HH^i(x)\in H\) lies in
    \(\left\langle \OO_p\;|\; p\in X\right\rangle\), and cohomologies in
    distinct degrees have disjoint supports.

    \begin{proof}[Proof that \(\HH^0(x)\) and \(\HH^{-n}(x)\) have
        zero--dimensional supports]
        The conditions on \(\ol K\) guarantee the containments of torsion classes
        \[
            \coh X[1] \cap H \quad \subseteq \quad \ol K \cap H \quad \subseteq
            \quad \langle \coh X[1]\cap H,\, \{\OO_p\;|\; p\in C\}\rangle ,
        \]
        and the first containment must be proper --- indeed, by
        \cite[lemma 5.16]{shimpiTorsionPairs3fold} the torsion class \(\coh
        X[1]\cap H\) is the supremum (i.e.\ union) of a strictly increasing
        chain \(0\subset T_1\subset T_2 \subset ...\) in
        \(\tors(H)\), so if \(\ol K \cap H\) is equal to this union then
        \(\HH^0(x)\in \ol K \cap H\) is contained in some \(T_i\).  But this is
        absurd, since \(\ol K\cap H\) is by definition the minimal torsion class
        containing \(\HH^0(x)\).

        Thus \(\HH^0(x)\) lies in \(\langle \coh X[1]\cap H,\, \{\OO_p\;|\; p\in
        C\}\rangle \setminus (\coh X[1]\cap H)\), sitting in a
        triangle
        \(y[1]\to \HH^0(x)\to z\to y[2]\)
        where \(y,z\in \coh X\) are sheaves such that \(y\in H[-1]\), and \(z\in
        \langle\OO_p\;|\;p\in C\rangle\) is non--zero.

        Now the analogous condition \(\ul K[-1] \cap H\subseteq \coh X \cap H\)
        implies that \(\HH^{-n}(x)\) is a sheaf, so \(\Hom^{<0}(y,\HH^{-n}x)\)
        vanishes and the above exact triangle gives the inclusions
        \(\Hom(z,\HH^{-n}x[i])\subseteq \Hom(\HH^0x,\HH^{-n}x[i])\) for
        \(i=0,1\) via a long exact sequence.

        But the conditions \(\Hom(x,x[-n])=\Hom(x,x[1-n])=0\), after truncating
        to the aisle and the coaisle of \(H\) as in the proof of
        \cref{prop:tiltingimproves}, imply
        \begin{equation}
            \Hom(\HH^0x,\HH^{-n}x)=\Hom{\!}^1(\HH^0x, \HH^{-n}x)=0.
            \label{eqn:homzero}\tag{!!}
        \end{equation}
        Thus we conclude \(\Hom(z,\HH^{-n}x)=\Hom^1(z,\HH^{-n}x)=0\), so that
        \(\HH^{-n}(x)\) is a sheaf with zero--dimensional support outside
        \(\Supp(z)\) by \cref{lem:skyscraperdetection}. It immediately follows
        (from \eqref{eqn:homzero} and \cref{lem:skyscraperdetection}) that
        \(\HH^0(x)\) is also a sheaf with zero--dimensional support, in
        particular \(\HH^0(x)=z\) and \(y=0\).
    \end{proof}

    \begin{proof}[Proof that the remaining cohomologies have zero--dimensional
        supports]
        One can continue to use truncation functors and analyse
        long exact sequences to compare the cohomologies \(\HH^i(x)\) against
        \(\HH^0(x)\) and \(\HH^{-n}(x)\), we find it slightly
        more convenient to instead examine the spectral sequence \cite[see
        e.g.][IV.2, Exercise 2]{gelfandMethodsHomologicalAlgebra}
        \[
            E_2^{p,q}
            =\bigoplus_{i\in \bbZ}\Hom{\!}^p(\HH^ix,\HH^{i+q}x)
            \Longrightarrow\Hom{\!}^{p+q}(x,x)
        \]
        and inductively show enough rows vanish on the second page. Suppose we
        have shown that the terms \(E_2^{p,q}\) all vanish for \(q\leq j\), from
        the above discussion this is true for \(j=-n\). Since the terms also
        vanish if \(p\leq 0\), we see that the differentials in and out of
        \(E_2^{0,1+j}\) and \(E_2^{1,1+j}\) are trivial so that these terms
        persist in future pages of the sequence, arising as subquotients of
        \(\Hom^{1+j}(x,x)\) and \(\Hom^{2+j}(x,x)\) respectively.

        In particular if \(j<-2\), then \(E_2^{0,1+j}=E_2^{1,1+j}=0\) and we
        have
        \begin{align*}
            \Hom{\!}^{\phantom{1}}(\HH^0x,\HH^{1+j}x)
            =\Hom{\!}^{\phantom{1}}(\HH^{-n-j-1}x,\HH^{-n}x) =0,\\
            \Hom{\!}^1(\HH^0x,\HH^{1+j}x)
            =\Hom{\!}^1(\HH^{-n-j-1}x,\HH^{-n}x)=0.
        \end{align*}
        Since \(\HH^0(x)\) and
        \(\HH^{-n}(x)\) both lie in \(\langle \OO_p\;|\;p\in X\rangle\),
        \cref{lem:skyscraperdetection} and the above equalities show that the
        same holds for \(\HH^{1+j}(x)\) and \(\HH^{-n-j-1}(x)\). In fact an
        identical reasoning for lower rows shows \(\HH^i(x)\in \langle
        \OO_p\;|\;p\in X\rangle\) for each of the values
        \(i\in\{-n,-n+1,...,j+1\}\cup \{-n-j-1,-n-j,...,0\}\). For these values
        of \(i\), the vanishing of \(\Hom(\HH^ix,\HH^{i+j+1}x)\subset
        E_2^{0,1+j}\) then shows that
        \(\HH^i(x)\) and \(\HH^{i+j+1}(x)\) have disjoint supports, so that
        \(E_2^{p,j+1}=0\) for remaining values of \(p\) too.

        Thus for each \(q\leq -2\), we have \(E_2^{0,q}=E_2^{1,q}=0\), and hence
        \(\HH^q(x)\) and \(\HH^{-n-q}(x)\) have zero--dimensional supports as
        above.

        If \(n\geq 3\) this
        accounts for all cohomologies of \(x\). If \(n=2\) the above argument
        fails to address the cohomology in degree \(-1\), but note that in this
        case the vanishing of
        \(E_2^{0,-1}=\Hom(\HH^0x,\HH^{-1}x)\oplus\Hom(\HH^{-1}x,\HH^{-2}x)\)
        alone suffices to apply \cref{lem:skyscraperdetection} and deduce that
        \(\dim(\Supp(\HH^{-1}x))=0\).
    \end{proof}

    \begin{proof}[Proof that cohomologies have mutually disjoint supports] Since
        \(x\) is an extension of shifts of skyscrapers, it admits a
        decomposition \(x=\bigoplus_{p\in C}x|_p\) indexed over closed points of
        \(C\), where the summand \(x|_p\) is a complex supported within
        \(\{p\}\). In particular the top non--zero cohomology of \(x|_p\) admits
        a non-trivial morphism (which factors via \(\OO_p\)) to the bottom
        non--zero cohomology of \(x|_p\).
        But \(x|_p\) also satisfies \(\Hom^{<0}(x|_p,x|_p)=0\), whence \(x|_p\)
        must be a sheaf concentrated in a single cohomological degree. Since
        \(\HH^i(x)=\bigoplus_{p\in C}\HH^i(x|_p)\), the result follows.
    \end{proof}
\end{lemma}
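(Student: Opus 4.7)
The plan is to handle the cohomologies in three waves: first the extremal ones $\HH^0(x)$ and $\HH^{-n}(x)$, then the middle ones by induction, and finally to split $x$ by point to address disjointness of supports.

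For the first wave, I would exploit the fact that $\ol K$ is the \emph{minimal} tilt of $H$ with the property $x\in \ol K\llbracket 0, n]$, equivalently $\ol K\cap H$ is the minimal torsion class of $H$ containing $\HH^0(x)$. The hypothesis forces this torsion class to sit between $\coh X[1]\cap H$ and the extension closure of $\coh X[1]\cap H$ together with some skyscrapers. Combining this with the fact (which I would invoke from \cref{thm:tiltclassification} and its supporting lattice-theoretic input in \cite{shimpiTorsionPairs3fold}) that $\coh X[1]\cap H$ itself is \emph{not finitely generated}, arising as a supremum of a strictly increasing chain of torsion classes, I would conclude that $\HH^0(x)$ cannot lie entirely inside $\coh X[1]\cap H$. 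Therefore $\HH^0(x)$ must sit in an exact triangle $y[1]\to \HH^0(x)\to z\to y[2]$ with $y\in \coh X$ and $z\in \langle \OO_p\mid p\in C\rangle$ nonzero. The symmetric analysis of $\ul K$ shows $\HH^{-n}(x)\in \coh X$. The hypothesis $\Hom^{<0}(x,x)=0$, fed through the truncation adjunctions, yields $\Hom(\HH^0 x,\HH^{-n}x)=\Hom^1(\HH^0x,\HH^{-n}x)=0$, so also $\Hom(z,\HH^{-n}x)=\Hom^1(z,\HH^{-n}x)=0$. Applying \cref{lem:skyscraperdetection} once gives $\HH^{-n}(x)\in \langle\OO_p\rangle$, and then a second application (with $\HH^{-n}(x)$ playing the role of the detector $u$) pulls $\HH^0(x)$ into $\langle\OO_p\rangle$ as well.

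For the second wave, I would use the Hom spectral sequence
\[
  E_2^{p,q}=\bigoplus_{i\in\bbZ}\Hom{\!}^p(\HH^ix,\HH^{i+q}x)\Longrightarrow \Hom{\!}^{p+q}(x,x)
\]
and run an induction on $q$, starting from the fact that all rows with $q\leq -n$ are zero. The rows with $p\leq 0$ vanish for trivial reasons, so the terms $E_2^{0,q+1}$ and $E_2^{1,q+1}$ receive and emit no nontrivial differentials and therefore persist to $E_\infty$; when $q+1<0$ they must embed into the vanishing groups $\Hom^{q+1}(x,x)$ and $\Hom^{q+2}(x,x)$. Hence for every pair of indices $(i,i+q+1)$ the groups $\Hom(\HH^ix,\HH^{i+q+1}x)$ and $\Hom^1(\HH^ix,\HH^{i+q+1}x)$ vanish; feeding this into \cref{lem:skyscraperdetection} (using the cohomologies already shown to lie in $\langle\OO_p\rangle$ as detectors) drags the newly accessed cohomologies into $\langle \OO_p\rangle$ too, and simultaneously forces their supports to be disjoint from those of the detectors. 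This lets the induction close, and also handles the slightly degenerate case $n=2$ where only the vanishing of $E_2^{0,-1}$ is available but is already enough.

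For the third wave, once all cohomologies are sums of skyscrapers, the $\Hom$- and $\Ext$-orthogonality of skyscrapers at distinct closed points yields a decomposition $x=\bigoplus_{p\in C}x|_p$ into summands supported at individual points. Each summand inherits $\Hom^{<0}(x|_p,x|_p)=0$. Any spread of $x|_p$ would produce a nonzero composition from its top cohomology through $\OO_p$ down to its bottom cohomology, contradicting this vanishing; hence each $x|_p$ is a single sheaf in a single degree, and cohomologies in distinct degrees of $x$ must have disjoint supports.

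The main obstacle is the first wave: extracting a skyscraper quotient of $\HH^0(x)$ from nothing more than the position of $\ol K$ in the Hasse lattice $\bigl[H,H[1]\bigr]$ requires knowing that the geometric heart $\coh X[1]\cap H$ sits at the top of a strictly increasing chain rather than being reached in finitely many tilts. Once that lattice input is granted, everything else is essentially bookkeeping on the spectral sequence and on $\Hom$-orthogonality of skyscrapers.
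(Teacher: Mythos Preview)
Your proposal is correct and follows essentially the same three-part argument as the paper: the minimality of $\ol K$ combined with the lattice-theoretic fact that $\coh X[1]\cap H$ is an increasing union (not finitely generated as a torsion class) to extract a nonzero skyscraper quotient $z$ of $\HH^0(x)$, then \cref{lem:skyscraperdetection} twice for the extremal cohomologies; the same spectral sequence and row-by-row induction for the intermediate cohomologies (with the same special handling of the $n=2$ edge case); and the same pointwise splitting $x=\bigoplus_p x|_p$ for disjointness. The only slip is the bound ``$q+1<0$'' in your second wave, which should read $q+1<-1$ to guarantee $\Hom^{q+2}(x,x)=0$, but you already account for this by treating the boundary case separately.
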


The above result, combined with \cref{lem:inductioncontinues} and
\cref{cor:twotermclassification}, finishes our analysis of all complexes
supported on \(C\) that have only non--negative self extensions, describing a
complete proof of \cref{keythm:objclassification}.
\pagebreak

\section{Corollaries, corollaries}

A \emph{brick} in \(\Dm X\) is a complex \(x\)  whose self--extensions mimic
those of a simple module, i.e.\ \(\Hom^{<0}(x,x)=0\) and \(\Hom(x,x)=\bbC\). A
\emph{semibrick} is a set \(\{x_i\;|\;i\in I\}\) of bricks that satisfy the
orthogonality \(\Hom^{\leq 0}(x_i,x_j)=0\) whenever \(i\neq j\). A
\emph{simple--minded collection} is a finite semibrick that is not contained in
a proper thick subcategory of \(\Dm X\). The heart of an algebraic t-structure
is the extension--closure of a simple--minded collection, conversely every
simple--minded collection is the set of simples of an algebraic heart \cite{al-nofayeeSimpleObjectsHeart}.

We sketch how finite semibricks, hence also bricks, simple--minded collections, and
algebraic t-structures in \(\Dm X\), are all readily classified using
\cref{keythm:objclassification}. Throughout, the phrase \emph{up to mutation}
means up to shifts and iterated application of functors appearing in
\eqref{eqn:allfunctors} and their inverses.

\begin{corollary}
    \label{cor:sbrickclassification}
    Every finite semibrick \(\{x_0,...,x_m\}\subset \Dm X\) must, up to
    mutation, either be a collection of simple \(\Lambda_i-\)modules for some \(i\in
    \bbZ/N\bbZ\), or be of the form \(\big\{\OO_{p_0}[n_0],...,\OO_{p_m}[n_m]\big\}\)
    for distinct closed points \(p_0,...,p_m\) in \(X\) (or in \(W\)) and
    integers \(n_0,...,n_m\in \bbZ\).

    Consequently every brick in \(\Dm X\) is, up to mutation, a simple
    \(\Lambda_i-\)module for some \(i\in \bbZ/N\bbZ\), or a skyscraper sheaf
    \(\OO_p\) at a closed point in \(X\) or in \(W\). Likewise, any heart of an
    algebraic t-structure in \(\Dm X\) is, up to mutation, the standard heart
    \(\rflmod\Lambda_i\subset \Dfl\Lambda_i\) for some \(i\in \bbZ/N\bbZ\).

    \begin{proof}
        Consider the object \(x=x_0\oplus ... \oplus x_m\), which clearly satisfies
        \(\Hom^{<0}(x,x)=0\) and is therefore classified by
        \cref{keythm:objclassification}. If it fits the description
        \ref{item:objclass2}, then each \(x_i\) must be a shift of some sheaf
        \(\OO_p\) (\(p\in X\)) since those are the only brick sheaves with
        zero--dimensional support (to see this, note all objects in \(\langle
        \OO_p\rangle\) have a non-zero endomorphism which factors via \(\OO_p\)).

        On the other hand if \(x\) fits the description \ref{item:objclass1},
        i.e.\ the semibrick \(\{x_0,...,x_m\}\) up to mutation lies in some
        \(H'=\rflmod\Lambda_j\), then consider the smallest torsion class
        \(T\subset H'\) that contains \(x\). By \cite[proposition
        2.6]{shimpiTorsionPairs3fold}, each \(x_i\) is a simple object in the
        heart \(K\) obtained by tilting \(H'\) in this torsion class. Examining
        the Hasse quiver of \(\big[H',H'[1]\big]\) as sketched in
        \cref{subsec:dreaminduction}, we see that \(K\) must also be a tilt of
        \(\Phi(\rflmod\Lambda_0)[1]\) or \(\Phi^{-1}(\rflmod\Lambda_0)\) for
        some mutation functor \(\Phi\). Thus \(K\) is, up to mutation, a tilt of
        \(\rflmod\Lambda_0\) and required descriptions of the simple objects
        \(x_0,...,x_m\in K\) can be read off from \cref{thm:tiltclassification}.

        To address the last case, suppose \(x\) is a two--term complex of
        coherent sheaves as in \ref{item:objclass3}, without loss of generality
        on \(X\). Each summand \(x_i\) must then also admit the same
        description, sitting in an exact triangle \[\mathcal{F}_i[1]\to x_i\to
        \mathcal{G}_i\to \mathcal{F}_i[2]\] with \(\mathcal{G}_i\in \langle
        \OO_p\;|\;p\in X\rangle\), \(\mathcal{F}_i\in \coh X\), and
        \(\Hom(\mathcal{G}_i,\mathcal{F}_i)=0\).

        Claim for each \(i\), either \(\mathcal{F}_i\) or \(\mathcal{G}_i\)
        vanishes. Indeed if \(\mathcal{F}_i\neq 0\), it cannot be a summand in
        \(\mathcal{G}_i[-2]\) so the first map in the
        triangle above is a non--zero element in \(\Hom(\mathcal{F}_i[1],x_i)\).
        This element is annihilated by the natural map
        \(\Hom(\mathcal{F}_i[1],x_i)\to \Hom(\mathcal{G}_i[-1],x_i)\) since any two
        consecutive maps in an exact triangle compose to zero
        \cite[\href{https://stacks.math.columbia.edu/tag/0146}{tag
        0146}]{stacks-project}. Thus in the long exact sequence
        \[
            ...\to\underbrace{\Hom(\mathcal{F}_i[2],x_i)}_{0}\longrightarrow\Hom(\mathcal{G}_i,x_i)\longrightarrow
            \underbrace{\Hom(x_i,x_i)}_{\mathbb{C}}\xrightarrow{\;\;\alpha\;\;}
            \Hom(\mathcal{F}_i[1],x_i)\longrightarrow
            \Hom(\mathcal{G}_i[-1],x_i)\to ...
        \]
        the final map has non-trivial kernel so that \(\alpha\) is injective. It
        follows that \(\Hom(\mathcal{G}_i,x_i)=0\), and considering the
        long exact sequence associated to \(\Hom(\mathcal{G}_i,-)\) shows
        \(\Hom(\mathcal{G}_i,\mathcal{F}_i[1])\) vanishes too. If
        \(\mathcal{G}_i\) were non--zero then \cref{lem:skyscraperdetection}
        would show \(\Supp(\mathcal{F}_i)\) is zero--dimensional hence disjoint
        from \(\Supp(\mathcal{G}_i)\), but this is absurd since we would then
        have a non--trivial splitting
        \(x_i=\mathcal{F}_i[1]\oplus\mathcal{G}_i\) of a brick.

        Thus there is a \(k\geq 0\) and a reordering of the summands of \(x\) so
        that \(x_i=\mathcal{G}_i\) for \(i < k\), and \(x_i=\mathcal{F}_i[1]\)
        for \(i\geq k\). If \(k\geq 1\) then \cref{lem:easyskyscraperdetection}
        shows each \(\mathcal{F}_i\) (\(i\geq k\)) has zero--dimensional support
        since \(\Ext^{-1}(x_0,x_i)=\Hom(\mathcal{G}_0,\mathcal{F}_i)\) and
        \(\Hom(x_0,x_i)=\Ext^1(\mathcal{G}_0,\mathcal{F}_i)\) both vanish, whence \(x\)
        also admits the description \ref{item:objclass2} and can be addressed as
        in the first paragraph.

        On the other hand if \(k=0\) (so that \(x\in
        \coh X[1]\)), then we can consider (as in \cref{prop:tiltingimproves})
        the maximal tilt \(\ol K\) of \(H=\rflmod\Lambda_0\) satisfying \(x\in
        \ol K\). We have \(\ol K \geq \coh X[1]\)  (i.e.\ \(\ol K\cap H
        \subseteq \coh X[1]\cap H\)), and equality cannot hold since the torsion
        class \(\coh X[1]\cap H\) is the union of an increasing chain in
        \(\tors(H)\) while the torsion class \(\ol K\cap H\) is the intersection
        of all torsion classes containing \(\HH^0(x)\) (see proof of
        \cref{lem:collapse}). It follows from \cref{thm:tiltclassification} that
        \(\ol K\) is an algebraic heart, so that \(x\in \ol K\) also admits the
        description \ref{item:objclass1} and is addressed by the second
        paragraph.
    \end{proof}
\end{corollary}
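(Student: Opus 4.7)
The plan is to reduce to \cref{keythm:objclassification} by applying it to the direct sum $x = x_0 \oplus \cdots \oplus x_m$, whose semibrick structure ensures $\Hom^{<0}(x,x) = 0$. This drives a case analysis along the three possibilities \ref{item:objclass1}--\ref{item:objclass3}. Case \ref{item:objclass2} is the easiest: each $x_k$ is then a shift of a sheaf in $\langle \OO_p \mid p \in C \rangle$, and any brick in that extension closure must be a single skyscraper, since any longer iterated extension admits a non-trivial endomorphism factoring through some $\OO_p$ in its support; the semibrick orthogonality then makes the points $p_0, \ldots, p_m$ distinct.

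Case \ref{item:objclass1} places the semibrick (after mutation) in $H' = \rflmod \Lambda_j$ for some $j$, though not necessarily as the set of simples. To remedy this I would pass to the minimal torsion class $T \subseteq H'$ containing $x_0 \oplus \cdots \oplus x_m$, and use the Happel--Reiten--Smal\o{} tilted heart $K$ to realise each $x_k$ as simple in $K$ (via \cite[proposition 2.6]{shimpiTorsionPairs3fold}). The task is then to identify $K$ up to mutation: the Hasse quiver \eqref{eqn:Hassequiv} of algebraic hearts in $[H',H'[1]]$, together with \cref{thm:tiltclassification} (reindexed by $j$), shows every tilt of $H'$ is comparable to some $\Phi(\rflmod \Lambda_i)$ with $\Phi$ a composite of mutation functors, and the simples of such a $K$ can then be read off to match the statement.

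The main obstacle is case \ref{item:objclass3}, where each $x_k$ sits in its own triangle $\mathcal F_k[1] \to x_k \to \mathcal G_k \to \mathcal F_k[2]$ with $\mathcal G_k$ zero-dimensional. A long exact sequence argument, using $\Hom(x_k,x_k) = \bbC$ and the non-triviality of the first edge $\mathcal F_k[1] \to x_k$, forces $\Hom(\mathcal G_k, x_k) = 0$ and hence $\Hom(\mathcal G_k, \mathcal F_k[1]) = 0$; \cref{lem:skyscraperdetection} then forces at least one of $\mathcal F_k, \mathcal G_k$ to vanish, as otherwise $x_k$ would split non-trivially as $\mathcal F_k[1] \oplus \mathcal G_k$. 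The semibrick therefore partitions into a skyscraper subcollection, addressed by case \ref{item:objclass2}, and a shifted-sheaf subcollection $\{\mathcal F_k[1]\}$; when both types appear, \cref{lem:easyskyscraperdetection} applied to the $\Hom$-vanishings between them forces the $\mathcal F_k$ to have zero-dimensional support as well. The residual case $x \in \coh X[1]$ is handled by noting that the maximal tilt $\overline K$ of $H$ containing $x$ strictly exceeds $\coh X[1]$, using the union description of $\coh X[1] \cap H$ in \cite[lemma 5.16]{shimpiTorsionPairs3fold}; \cref{thm:tiltclassification} then forces $\overline K$ to be algebraic and returns us to case \ref{item:objclass1}.

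The brick statement is the $m = 0$ specialisation. For the algebraic heart statement, any such heart is the extension closure of its simples \cite{al-nofayeeSimpleObjectsHeart}, which form a finite semibrick generating $\Dm X$ as a thick subcategory; the skyscraper alternative is incompatible with this generation condition, leaving only $\rflmod \Lambda_i$ up to mutation.
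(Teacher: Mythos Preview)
Your proposal is correct and follows essentially the same route as the paper's proof: the same reduction to $x=\bigoplus x_k$, the same three-way case split along \ref{item:objclass1}--\ref{item:objclass3}, the same use of \cite[proposition 2.6]{shimpiTorsionPairs3fold} to make the $x_k$ simple in a tilted heart, the same long-exact-sequence argument forcing one of $\mathcal F_k,\mathcal G_k$ to vanish, and the same endgame via the maximal tilt $\ol K$ when $x\in\coh X[1]$. Your added sentence deducing the algebraic-heart statement from the generation condition on simple-minded collections is a detail the paper leaves to the reader.
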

\medskip

The classification of algebraic t-structures in \(\Dm X\) can be lifted to a
classification of tilting complexes in the bigger category \(\Db X\), following
\cite[\S 5.4]{hiranoStabilityConditions3fold}. Recall that a complex \(t\in \Db
X\) is \emph{tilting} if it satisfies \(\Hom^i(t,t)=0\) for all \(i\neq 0\) and
the smallest thick subcategory of \(\Db X\) containing \(t\) also contains all
perfect complexes.

Given an algebra \(\Lambda\) and a derived equivalence \(\Psi:\Db \Lambda\to \Db
X\), the image \(\Psi(\Lambda)\) of the regular \(\Lambda\)--module is a tilting
object in \(\Db X\). The converse is Rickard's theorem \cite[theorem
6.4]{rickardMoritaTheoryDerived}, that any tilting object \(t\in \Db X\) arises
in the above fashion from the \(R\)--algebra \(\Lambda=\End(t)\) and the
\(R\)--linear equivalence determined as \(\Psi^{-1}=\RHom(t,-)\). If the complex
\(t\) (equivalently the algebra \(\Lambda\)) is \emph{basic}, i.e.\ if distinct
indecomposable summands of \(t\) are non--isomorphic, then Morita
theory\footnote{This requires \(\Lambda\) to be semi--perfect, which is true
since it is a module--finite algebra over the complete local domain \(R\).}
\cite[see e.g.][proposition 18.37]{lamLecturesModulesRings} picks out \(t\) as
the unique basic projective generator in the heart \(\Psi(\rmod\Lambda)\subset
\Db X\).

In this fashion, each equivalence \(\Db \Lambda_i\to \Db X\) composed of the
functors appearing in \eqref{eqn:allfunctors} and their inverses gives a unique
basic tilting object in \(\Db X\). These tilting
objects sit in a connected tetravalent exchange graph so that each one can be
obtained from \(\text{VdB}^{-1}(\Lambda_0)=\mathscr{N}\oplus\OO_X\) by iterated
left or right mutation in indecomposable summands (i.e.\ by \emph{simple
mutation}). Other tilting complexes include non--trivial shifts of those already
considered, these can be obtained by simple mutation from \((\mathscr{N}\oplus
\OO_X)[n]\). We show this list is exhaustive.

\begin{corollary}
    \label{cor:tiltingclassification}
    Any basic tilting complex \(t\in \Db X\) can be obtained by iteratively
    mutating the tilting complex \((\mathscr{N}\oplus\OO_X)[n]\) in
    indecomposable summands, for some \(n\in \bbZ\).
\end{corollary}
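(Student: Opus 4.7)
The plan is to reduce the classification of basic tilting complexes in \(\Db X\) to the classification of algebraic t-structures on \(\Dm X\) captured by \cref{cor:sbrickclassification}. Given a basic tilting complex \(t\in \Db X\), Rickard's theorem yields an \(R\)-linear derived equivalence \(\Psi:\Db\Lambda\isoto \Db X\), with \(\Lambda=\End(t)\) a module-finite \(R\)-algebra and \(\Psi(\Lambda)=t\); the bounded heart \(\mathcal{A}_t\coloneq\Psi(\rmod\Lambda)\subset \Db X\) contains \(t\) as its basic projective generator. Because \(\Lambda\) is module-finite over the local ring \(R\), the equivalence \(\Psi\) preserves supports and restricts to \(\Dfl\Lambda\isoto \Dm X\); in particular the restriction \(\mathcal{A}_t\cap \Dm X = \Psi(\rflmod\Lambda)\) is an algebraic heart of \(\Dm X\).

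By \cref{cor:sbrickclassification} applied to this restricted heart, there exist an index \(i\in\bbZ/N\bbZ\), an integer \(n\in\bbZ\), and an equivalence \(\Phi:\Db\Lambda_i\isoto\Db X\) obtained as a composite of \(\text{VdB}\), mutation functors from \eqref{eqn:allfunctors}, and shifts, such that \(\Phi(\rflmod\Lambda_i[n])=\mathcal{A}_t\cap \Dm X\). The tilting complex \(t_0\coloneq\Phi(\Lambda_i[n])\) lies in the mutation orbit of \((\mathscr{N}\oplus\OO_X)[n]\) by the discussion preceding the corollary, so it suffices to prove \(t\cong t_0\). Both are basic projective generators of bounded hearts \(\mathcal{A}_t\) and \(\mathcal{A}_{t_0}\coloneq\Phi(\rmod\Lambda_i[n])\) of \(\Db X\) which, by construction, coincide on their finite-length parts inside \(\Dm X\).

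The main obstacle is to upgrade this finite-length agreement to a full equality \(\mathcal{A}_t=\mathcal{A}_{t_0}\); once secured, the uniqueness of basic projective generators in the module category of a semi-perfect algebra yields \(t\cong t_0\) and completes the proof. I would handle this reconstruction step by exploiting completeness: the composed \(R\)-linear equivalence \(\Phi^{-1}\circ\Psi:\Db\Lambda\isoto\Db\Lambda_i\) carries \(\rflmod\Lambda\) isomorphically onto \(\rflmod\Lambda_i[n]\), identifying the finite-length quotients \(\Lambda/J^k\) with \(\Lambda_i/J_i^k\) for every \(k\). Since both \(\Lambda\) and \(\Lambda_i\) are module-finite over the complete local ring \(R\), each is the inverse limit of these quotients, so \(\Lambda\cong\Lambda_i\) as \(R\)-algebras; the resulting Morita equivalence of module categories extends the agreement from \(\rflmod\) to the whole of \(\rmod\), pinning down \(\mathcal{A}_t=\mathcal{A}_{t_0}\) and exhibiting \(t\) in the claimed mutation orbit of \((\mathscr{N}\oplus\OO_X)[n]\).
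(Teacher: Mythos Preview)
Your strategy matches the paper's through the identification \(\Psi(\rflmod\Lambda)=\Phi(\rflmod\Lambda_i)[n]\) via \cref{cor:sbrickclassification}; the divergence is in the upgrade from \(\rflmod\) to \(\rmod\), and this is where your argument has a gap.

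What must be shown is that the \emph{given} equivalence \(F\coloneq\Phi^{-1}\circ\Psi[-n]:\Db\Lambda\to\Db\Lambda_i\) carries the heart \(\rmod\Lambda\) onto \(\rmod\Lambda_i\). Your inverse--limit reconstruction, even granting the identifications \(F(\Lambda/J^k)\cong\Lambda_i/J_i^k\), yields only an \emph{abstract} \(R\)--algebra isomorphism \(\Lambda\cong\Lambda_i\). The ``resulting Morita equivalence'' is then \emph{some} equivalence \(\rmod\Lambda\simeq\rmod\Lambda_i\), but nothing forces it to coincide with \(F\); knowing the algebras are isomorphic does not pin down the particular subcategories \(\mathcal{A}_t=\Psi(\rmod\Lambda)\) and \(\mathcal{A}_{t_0}=\Phi(\rmod\Lambda_i)[n]\) of \(\Db X\) to be equal. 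One could attempt to rescue this by tracking \(F\) through the limit --- arguing \(F(\Lambda)=\mathbf{R}\!\lim F(\Lambda/J^k)=\Lambda_i\) directly --- but this requires extending \(F\) to unbounded derived categories, checking it commutes with homotopy limits, and verifying the transition maps match, none of which you address.

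The paper bypasses all of this by invoking \cite[lemma 5.7]{hiranoStabilityConditions3fold}, which reconstructs the standard aisle on \(\Db\Lambda\) \emph{intrinsically} from \(\rflmod\Lambda\) via
\[
\rmod\Lambda\,[\geq 0]=\big\{x\in\Db\Lambda\;\big|\;\Hom{\!}^j(x,y)=0\text{ for all }j<0\text{ and all }y\in\rflmod\Lambda\big\}.
\]
Since this description depends only on the triangulated structure and the subcategory \(\rflmod\Lambda\), it is automatically preserved by \(F\), giving \(F(\rmod\Lambda)=\rmod\Lambda_i\) and hence \(\mathcal{A}_t=\mathcal{A}_{t_0}\) in one stroke.
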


\begin{proof}
    Consider the module--finite \(R\)--algebra \(\Lambda=\End(t)\) and the
    induced equivalence \(\Psi:\Db \Lambda\to \Db X\) with inverse
    \(\RHom(t,-)\). By \(R\)--linearity, the functor \(\Psi\) restricts to an
    equivalence \(\Psi:\Dfl\Lambda\to \Dm X\) and hence the image of
    \(\rflmod\Lambda\) is an algebraic heart in \(\Dm X\). By
    \cref{cor:sbrickclassification}, we thus see that \(\Psi(\rflmod\Lambda)\)
    agrees with some heart \(\Phi(\rflmod\Lambda_i)[n]\) for \(i\in
    \bbZ/N\bbZ\), \(n\in \bbZ\), and \(\Phi\) a composite of functors appearing in
    \eqref{eqn:allfunctors} and their inverses.

    But by \cite[lemma 5.7]{hiranoStabilityConditions3fold}, the
    subcategory \(\rflmod\Lambda\subset \Db\Lambda\) determines the natural
    t-structure via
    \[
        \rmod \Lambda\,[\geq 0] = \left\{x\in \Db\Lambda \;|\;
        \Hom{\!}^j(x,y)=0\text{ for all } j<0 \text{ and all }y\in
        \rflmod\Lambda\right\}.
    \]
    Likewise the subcategory \(\rflmod\Lambda_i\subset \Db \Lambda_i\)
    determines the natural t-structure with heart \(\rmod\Lambda_i\), thus we
    see that the the equality \(\Psi(\rflmod\Lambda)=\Phi(\rflmod\Lambda_i)[n]\)
    extends to an equality \(\Psi(\rmod\Lambda)=\Phi(\rmod\Lambda_i)[n]\). In
    particular \(t=\Psi(\Lambda)\) must equal the basic projective generator
    \(\Phi(\Lambda_i)[n]\in \Phi(\rmod\Lambda_i)[n]\), so that \(t\) is obtained
    by iterated simple mutation from \((\mathscr{N}\oplus \OO_X)[n]\) as required.
\end{proof}

Note that the shift \(n\) appearing above is unique, and can be read off
as the cohomological degree of the generic stalk \(\bbk\otimes_R t\) where
\(\bbk=\bbC(Z)\) is the function field of \(Z\). Indeed considering
\(t\in\Db\Lambda_0\) (across Van den Bergh's equivalence), the localisation
\(\bbk\otimes t\) is a tilting module over \(\bbk\otimes\Lambda_0\) \cite[lemma
6.4]{namanyaGroupActionsAlgebraic}. But \(\bbk\otimes \Lambda_0\) is a matrix
ring over the field \(\bbk\) (see lemma 6.2 \emph{ibid}) so every tilting
\((\bbk\otimes\Lambda_0)\)--module is the shift of a \(\bbk\)--vector space. It
follows that \(t\) is obtained by simple mutation from \(\Lambda_0[n]\) if and
only if \(\bbk\otimes t[-n]=\bbk\otimes\Lambda_0\) is a \(\bbk\)--vector space.
\medskip

We conclude by addressing the connectivity of the stability manifold of \(\Dm
X\), referring the reader to \cite[\S 3]{bridgelandSpacesStabilityConditions}
for definitions and generalities. The stability manifold \(\Stab(\Dm X)\) has been extensively
analysed
\cite{bridgelandStabilityConditionsKleinian,todaStabilityConditionsCrepant,hiranoStabilityConditions3fold}
with a key result being the existence of a connected component \(\Stab^\circ
(\Dm X)\) that contains all stability conditions \((Z,P)\) for which some
heart \(P(\varphi,\varphi+1]\) agrees with some \(\rflmod\Lambda_j\) up to
mutation, equivalently is algebraic (\cref{cor:sbrickclassification}). We show
that every locally finite stability condition on \(\Dm X\) is of this form.

\begin{corollary}
    \label{cor:stabconnected}
    For any locally finite Bridgeland stability condition \((Z,P)\) on \(\Dm
    X\), there is a phase \(\varphi\in \bbR\) such that the heart
    \(P(\varphi,\varphi+1]\) is algebraic. In particular the stability
    manifold \(\Stab(\Dm X)=\Stab^\circ(\Dm X)\) is connected.

    \begin{proof}
        Given the stability condition \(\sigma=(Z,P)\), consider the set
        \(\big\{\varphi\in \bbR\;\big\vert\; P(\varphi)\neq \emptyset\big\}\) of
        phases occupied by \(\sigma\)--stable objects. Now a \(\sigma\)--stable
        object \(x\in \Dm X\) of phase \(\varphi\) is necessarily simple in the
        Abelian category \(P(\varphi)\) and therefore is a brick, equal to a
        simple \(\Lambda_i\)--module or a skyscraper sheaf up to mutation
        (\cref{cor:sbrickclassification}).

    \begin{minipage}[c]{0.67\textwidth}
        But \cite[lemma 4.18]{shimpiTorsionPairs3fold} then shows that the
        \(\KK\)--theory class \([x]\) is a restricted affine root in the rank
        two lattice \(\KK(\Dm X)\), that is to say there is a root system
        \(\Pi \subset \bbZ^{\oplus m}\) associated to a Cartan matrix of affine
        type, and a projection \(\bbZ^{\oplus m}\onto \bbZ^{\oplus 2}\simeq
        \KK(\Dm X)\) which maps a root \(\alpha\in \Pi\) to \([x]\). Such
        restricted root systems have been studied in \cite[chapter
        2]{iyamaTitsConeIntersections}, they define affine hyperplane
        arrangements in \(\bbR^2\) that are locally finite away from the origin.
        It follows that the possible phases of the complex number \(Z[x]\) must
        be restricted to a countable set of values with a discrete set of
        accummulation points.

        The example alongside is the affine \(\text{D}_4\) restricted root
        system associated to a length \(2\) flop --- clearly all phases
        accummulate towards that of \(\pm \delta\).
    \end{minipage}\hspace{0.03\textwidth}
    \begin{minipage}[c]{0.3\textwidth}
        \includegraphics[width=\textwidth]{./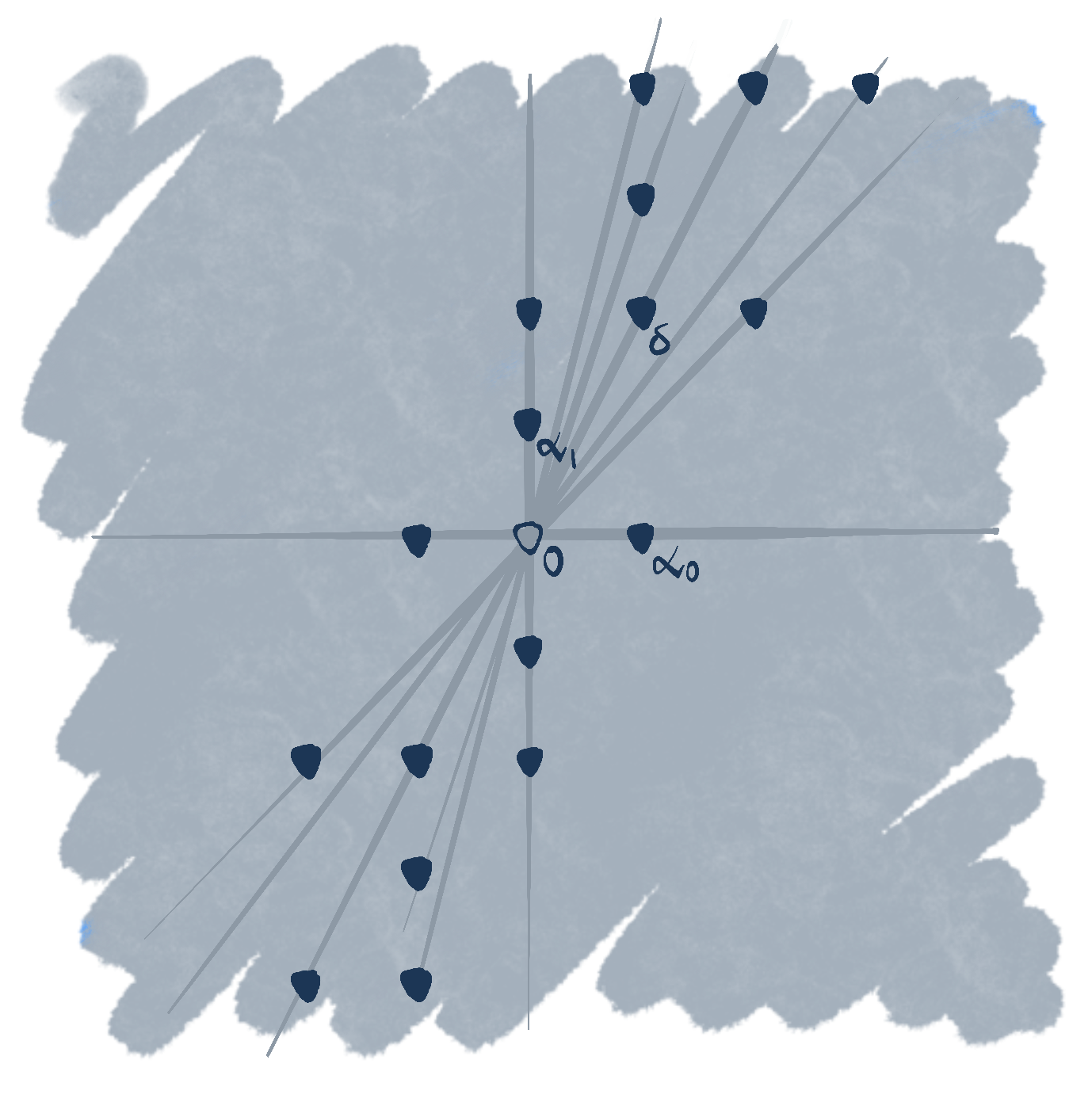}
    \end{minipage}

        In particular \(\sigma\) has a
        `non-trivial phase gap', i.e.\ there is a \(\varphi\in \bbR\) and
        \(\epsilon>0\) such that \(P(\varphi,\varphi+\epsilon)=\emptyset\). If
        the image of \(Z:\KK(\Dm X)\to \bbC\) is a discrete subgroup of
        \(\bbC\), then we may use
        \cite[lemma 4.4]{bridgelandStabilityConditionsK3} to conclude that
        \(P(\varphi,\varphi+1]=P(\varphi+\epsilon,\varphi+1]\) is an algebraic
        Abelian category. On the other hand if \(\text{img}(Z)\) is a
        non--discrete rank \(2\) subgroup of \(\bbC\), then this image must
        lie in the \(\bbR\)--span of some non--zero complex number
        \(e^{i\pi\varphi}\) so that the phase of each stable object is an
        integer away from \(\varphi\). Then local finiteness allows us to
        conclude that \(P(\varphi,\varphi+1]=P(\varphi+1)\) is algebraic.
    \end{proof}
\end{corollary}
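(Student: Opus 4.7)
The plan is to leverage \cref{cor:sbrickclassification} to reduce the connectedness question to a heart-classification question. Since the known connected component \(\Stab^\circ(\Dm X)\) contains every stability condition for which some heart \(P(\varphi,\varphi+1]\) is algebraic, the task reduces to producing, for each locally finite \(\sigma=(Z,P)\), a phase \(\varphi\) where this heart is algebraic.

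The starting observation is that every \(\sigma\)-stable object \(x\) is simple in its phase slice \(P(\varphi)\), hence a brick; by \cref{cor:sbrickclassification} it must therefore be, up to shifts and mutation, either a simple \(\Lambda_i\)-module or a skyscraper sheaf on \(X\) or \(W\). Consequently the class \([x]\in\KK(\Dm X)\simeq\bbZ^{\oplus 2}\) is drawn from a highly controlled set: the union of the classes of simples across all algebraic hearts, together with classes of skyscrapers. I would expect this set to match a restricted affine root system of the type already analysed in \cite{shimpiTorsionPairs3fold}, producing in \(\bbR^2\) a locally finite hyperplane arrangement away from the origin, with at most two accumulation directions for the arguments \(\arg Z[x]\) corresponding to the imaginary root \(\pm\delta\).

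From here the plan splits into cases according to the image of the central charge. If \(\mathrm{img}(Z)\subset\bbC\) is a discrete subgroup, then local finiteness of the stable phases combined with discreteness of \(Z\)-values furnishes a genuine phase gap \((\varphi,\varphi+\varepsilon)\) containing no stable objects, and Bridgeland's length-heart criterion (\cite[lemma 4.4]{bridgelandStabilityConditionsK3}) upgrades \(P(\varphi,\varphi+1]\) to an algebraic heart. If \(\mathrm{img}(Z)\) is not discrete, then being of rank \(2\) it must lie on a real line through the origin, so the arguments \(\arg Z[x]\) of stable objects all differ by integer multiples of \(1\); local finiteness then directly forces a single slice \(P(\varphi_0)\) to be of finite length, whence \(P(\varphi_0,\varphi_0+1]=P(\varphi_0)\) is algebraic.

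The main obstacle I anticipate is the identification of \(\KK\)-classes of bricks with a restricted affine root system of a known type and the attendant control on accumulation of phases --- without this input, a naively placed stability function \(Z\) could send the (a priori only countable) set of brick classes densely into \(\bbC\), defeating the phase-gap argument entirely. Granting that identification from \cite{shimpiTorsionPairs3fold}, the remaining case analysis is routine; the algebraic heart so produced then places \(\sigma\) in \(\Stab^\circ(\Dm X)\) by definition, and connectedness of \(\Stab(\Dm X)\) follows.
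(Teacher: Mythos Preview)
Your proposal is correct and follows essentially the same route as the paper: classify stable objects as bricks via \cref{cor:sbrickclassification}, identify their \(\KK\)-classes with a restricted affine root system using \cite{shimpiTorsionPairs3fold} to obtain a phase gap, then split on whether \(\mathrm{img}(Z)\) is discrete (invoking \cite[lemma 4.4]{bridgelandStabilityConditionsK3}) or non-discrete rank two (hence contained in a real line, so only one phase modulo \(\bbZ\) occurs and local finiteness gives an algebraic slice). The only cosmetic slip is that in the non-discrete case the heart \(P(\varphi_0,\varphi_0+1]\) equals \(P(\varphi_0+1)\) rather than \(P(\varphi_0)\), but this is immaterial.
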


\fakesection{References}
\printbibliography
{\footnotesize%
	\textsc{The Mathematics and Statistics Building, University
		of Glasgow, University Place, Glasgow G12 8QQ, UK.}\\
	\textit{Email address: }\texttt{parth.shimpi@glasgow.ac.uk}\\
	\textit{Web: }\texttt{https://pas201.user.srcf.net}}
\end{document}